\documentclass[12pt, reqno]{amsart}
\usepackage{amsmath, amsthm, amscd, amsfonts, amssymb, graphicx, color}
\usepackage[bookmarksnumbered, colorlinks, plainpages]{hyperref}
\hypersetup{colorlinks=true,linkcolor=red, anchorcolor=green, citecolor=cyan, urlcolor=red, filecolor=magenta, pdftoolbar=true}

\newtheorem{theorem}{Theorem}[section]

\newtheorem{proposition}[theorem]{Proposition}
\newtheorem{corollary}[theorem]{Corollary}
\theoremstyle{definition}
\newtheorem{definition}[theorem]{Definition}
\newtheorem{example}[theorem]{Example}

\theoremstyle{remark}

\numberwithin{equation}{section}

\begin{document}
\setcounter{page}{1}

\title[ Character cyclic amenability]{ Character cyclic amenability of Banach algebras}

\author[ H. Sadeghi]{ Hamid Sadeghi Nahrekhalaji}

\address{ Department of Mathematics, Fari.C., Islamic Azad University, Isfahan, Iran.}
\email{\textcolor[rgb]{0.00,0.00,0.84}{ h.sadeghi9762@gmail.com, sadeghi.9762@iau.ac.ir}}


\begin{abstract}
 In this paper, we introduce and study the notion of character cyclic amenability for Banach algebras, which combines the cyclicity condition on derivations with the character-dependent module structure arising in character amenability. We establish several basic properties of this notion and investigate its behavior under standard Banach algebra constructions. In particular, we study character cyclic amenability for opposite algebras, projective tensor products, $\ell^1$-direct sums, quotient algebras, dense homomorphic images, and $\theta$-Lau products. We also provide examples illustrating the relationship between character cyclic amenability and other amenability-type properties, including examples showing that character cyclic amenability does not imply cyclic amenability. Finally, we characterize character cyclic amenability for module extension Banach algebras in terms of $\phi$-cyclic derivations and derivations associated with the underlying Banach module.
\end{abstract} \maketitle
\textit{2020 Mathematics Subject Classification}: 46H25, 46M18, 46M20.

\textit{Key Words}: Character cyclic amenability; cyclic amenability; character amenability; $\phi$-cyclic derivations; Banach algebras; module extensions.
\section{\bf Introduction}

Amenability and its various generalisations play a central role in the theory of Banach algebras. Since Johnson's characterization of amenable Banach algebras in terms of continuous derivations into dual Banach modules, homological methods have provided a powerful framework for studying the algebraic and analytic properties of Banach algebras \cite{9}. Over the past decades, several weaker forms of amenability have been introduced to extend this theory to Banach algebras that are not amenable in the classical sense. Among these, weak amenability and cyclic amenability have received considerable attention. Weak amenability requires every continuous derivation from a Banach algebra into its dual to be inner  \cite{31}, whereas cyclic amenability concerns derivations satisfying an additional cyclicity condition \cite{320, 32}. These notions have been studied for various classes of Banach algebras and have led to a number of results concerning standard constructions such as tensor products, direct sums, module extensions, and Lau products \cite{100, 101}.

A complementary direction in the development of amenability theory is provided by character-based versions of homological properties. In this setting, one of the module actions is determined by a character of the Banach algebra, leading to notions such as character amenability, character contractibility, character biflatness, character biprojectivity, and weak character amenability. The notions of $\varphi$-amenability and $\varphi$-contractibility were introduced and studied in \cite{105} and \cite{104}, respectively. Character amenability was introduced by Monfared in \cite{106} and has subsequently been investigated from various perspectives, including weak character amenability, essential character amenability, character contractibility, and related homological properties \cite{110, 111, 113}. These character-based notions provide a framework for studying homological properties of Banach algebras relative to individual characters; see also \cite{108, 1111, 112, 120}.

Although cyclic amenability and character amenability have been extensively studied, their combination leads to a natural class of amenability-type properties that has not been investigated in the same framework. In particular, the cyclicity condition on derivations and the character-dependent module structure suggest considering derivations whose target module is determined by a character while simultaneously satisfying the cyclicity condition. This motivates the introduction of character cyclic amenability. More precisely, for a Banach algebra $A$ and $\phi\in\Delta(A)\cup\{0\}$, we consider continuous derivations from $A$ into $(_{\phi} A)^*$ satisfying the cyclicity condition. The resulting notion combines two distinct aspects of amenability theory and allows us to examine how cyclicity interacts with character-dependent module structures.

Motivated by the above considerations, we introduce the notion of character cyclic amenability. Let $A$ be a Banach algebra and let $\phi\in\Delta(A)\cup\{0\}$. We consider $A$ as a Banach $A$-bimodule, denoted by $_{\phi}A$, with the module actions given by
$$a\cdot x=\phi(a)x,\qquad x\cdot a=xa$$
for $a\in A$ and $x\in A$. A continuous derivation $D:A\longrightarrow (_{\phi} A)^*$ is called $\phi$-cyclic if
$$\langle D(a),b\rangle+\langle D(b),a\rangle=0$$
for all $a,b\in A$. We say that $A$ is character cyclic amenable if, for every $\phi\in\Delta(A)\cup\{0\}$, every continuous $\phi$-cyclic derivation from $A$ into $(_{\phi} A)^*$ is inner. Thus, character cyclic amenability combines the character-dependent module structure with the cyclicity condition on derivations.

The main results of this paper can be summarized as follows. We first establish several basic properties of character cyclic amenability and investigate its behavior under standard constructions of Banach algebras. In particular, we study opposite algebras, projective tensor products, $\ell^1$-direct sums, quotient algebras, and dense homomorphic images. We then investigate character cyclic amenability for $\theta$-Lau products and obtain corresponding permanence results. Finally, we obtain a characterization of character cyclic amenability for module extension Banach algebras.  Several examples are given to illustrate the obtained results and to distinguish character cyclic amenability from related amenability properties.

\section{\bf Preliminaries}
Let $A$ be a Banach algebra. The Banach $A$-bimodule $X$ is a Banach space together with the structure of an $A$-bimodule, and there exists a constant $k>0$ such that
$$
\|a.x\|\leq k\|a\|\|x\| \;\;\;\;\;\text{and}\;\;\;\;\; \|x.a\|\leq k\|a\|\|x\|\;\;\;\;\;\;\;\;\;\; (a\in A,\; x\in X).
$$
By renorming, we can suppose that $k=1$.
Let now $X^*$ denote the topological dual of $X$.
It is known that $X^*$ is also a Banach $A$-bimodule through
$$
a\cdot f(x)=f(x\cdot a)\;\;\;\;\;\text{and}\;\;\;\;\;f\cdot a(x)=f(a\cdot x)
\;\;\;\;\;\;\;\;\;\; (x\in X,\;a\in A, \;f\in X^*).
$$

Let $A$ be a Banach algebra and let $X$ be a Banach $A$-bimodule.
The linear map $D:A\rightarrow X$ 
is called a derivation if 
$$D(ab)=D(a)\cdot b+a\cdot D(b)\;\;\;\;\;\;\;\;\;\; (a, b\in A).$$
In particular, for each $x\in X$, $\text{ad}_x$ defined by
$$
\text{ad}_x(a)=a\cdot x-x\cdot a\;\;\;\;\;\;\;\;\;\; (a\in A)
$$
is a derivation. Such derivations are called inner.
Indeed, a Banach algebra $A$ is called amenable if for any Banach $A$-bimodule $X$, every continuous derivation $D$ from $A$ into $X^*$ is inner and weakly amenable if every continuous derivation $D:A\rightarrow A^*$ is inner. A bounded derivation $D:A\rightarrow A^*$ is called cyclic if
$$\left\langle D(a),b\right\rangle+\left\langle D(b),a\right\rangle=0, $$ 
for all $a, b \in A$. A  Banach algebra $A$ is said to be cyclic amenable if every cyclic bounded derivation $D:A\rightarrow A^*$ is inner.

Given a Banach algebra $\mathcal{A}$, denote by $\Delta(\mathcal{A})$ the spectrum of $\mathcal{A}$
consisting of all nonzero characters on $\mathcal{A}$. Let $\varphi\in \Delta(\mathcal{A})$.
The notions of $\varphi$-amenability and $\varphi$-contractibility were introduced and studied by Kaniuth et al. \cite{105} and Hu et al. \cite{104}, respectively. 
Indeed, $\mathcal{A}$ is called $\varphi$-amenable (resp. $\varphi$-contractible) if
for every Banach $\mathcal{A}$-bimodule $X$ with the left module action 
$$a\cdot x=\varphi(a)x\;\;\;\;\;\;\;\;\;\;(a\in \mathcal{A},\; x\in X),$$
 every continuous derivation from $\mathcal{A}$ into $X^*$ (resp. $X$) is inner. 
 Moreover, $\mathcal{A}$ is called $0$-amenable (resp. $0$-contractible)
 if for every Banach $\mathcal{A}$-bimodule $X$, with the zero left action,
 every continuous derivation from $\mathcal{A}$ into $X^*$ (resp. $X$) is inner.
 We also recall from \cite{104} that $\mathcal{A}$ is called character contractible if it is $\varphi$-contractible for all $\varphi\in\Delta(\mathcal{A})\cup\{0\}$.
 Furthermore, due to Monfared \cite{106}, $\mathcal{A}$ is called character amenable if it has a bounded right approximate identity and it is $\varphi$-amenable for all $\varphi\in\Delta(\mathcal{A})$.

Let $X$ be a Banach right $A$-module and $\phi\in\Delta(A)\cup\{0\}$. We
denote by $_{\phi}X$ the Banach $A$-bimodule $X$ such that the left action satisfies $a\cdot x=\phi(a)x~(a\in A, x\in X)$.  Then $(_{\phi}X)^*$ is also a Banach $A$-bimodule with the natural dual actions.
We always consider $A$ as a Banach right $A$-module with the actions obtained from the multiplication of $A$. Then
 $_\phi A$  is a Banach $A$-bimodule with the actions defined by 
\[ a\cdot x=\phi(a)x,~~~x\cdot a=xa~~~(a,x\in A).\]
Moreover, $(_\phi A)^*$ is a Banach $A$-bimodule with the left and right actions defined by
\[ a\cdot f=af, ~ f\cdot a=\phi(a)f,\] 
 for all $a\in A$ and $f\in A^*$.
Here $af\in A^*$ is defined by $(af)(b)=f(ba)$ for all $b\in A$ (see \cite{110}).
 A Banach algebra $A$ is called weakly $\phi$-amenable if  every derivation $D:A\longrightarrow (_\phi A)^*$ is inner. Also, $A$ is called weakly character amenable whenever $A$ is weakly $\phi$-amenable for all $\phi\in\Delta(A)\cup\{0\}$.



\section{ $\phi$-cyclic amenability }
We commence this section with the following definition:

   \begin{definition}
 Let $A$ be a Banach algebra and let $\phi\in\Delta(A)\cup\{0\}$. A continuous derivation
$D:A\longrightarrow(_{\phi} A)^*$ is called a $\phi$-cyclic derivation if
$$\langle D(a),b\rangle+\langle D(b),a\rangle=0$$
for all $a,b\in A$. We say that $A$ is $\phi$-cyclic amenable if every $\phi$-cyclic derivation from $A$ into $(_{\phi} A)^*$ is inner. Moreover, $A$ is called character cyclic amenable if it is $\phi$-cyclic amenable for every $\phi\in\Delta(A)\cup\{0\}$.
\end{definition}

\begin{theorem}\label{the 2.3}
Let $A$ be a Banach algebra. If $A$ has a bounded left approximate identity,
then $A$ is  character cyclic amenable. In particular, the unitization $A^\sharp$ of $A$ is 
character cyclic amenable.
\end{theorem}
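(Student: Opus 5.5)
The plan is to build the implementing functional directly from the bounded left approximate identity, using cyclicity to move $D(e_\alpha)$ from one argument to the other. Fix $\phi\in\Delta(A)\cup\{0\}$ and a $\phi$-cyclic derivation $D:A\to(_{\phi}A)^*$. With the module actions of $(_{\phi}A)^*$ recalled in the preliminaries, the derivation identity reads
\[
\langle D(ab),c\rangle=\phi(b)\langle D(a),c\rangle+\langle D(b),ca\rangle\qquad(a,b,c\in A).
\]
For $f\in A^*$ the inner derivation is given by $\langle \mathrm{ad}_f(a),c\rangle=f(ca)-\phi(a)f(c)$. So the goal is to find $f\in A^*$ with
\[
\langle D(a),c\rangle=f(ca)-\phi(a)f(c)\qquad(a,c\in A).
\]

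Let $(e_\alpha)$ be a bounded left approximate identity. Since $D$ is bounded, $(D(e_\alpha))$ is bounded in $A^*$. By Banach--Alaoglu, after passing to a subnet we may assume $D(e_\alpha)\to g$ in the weak$^*$ topology. Put $f=-g$. Taking the weak$^*$ limit in the cyclicity relation $\langle D(c),e_\alpha\rangle=-\langle D(e_\alpha),c\rangle$ gives
\[
\lim_\alpha\langle D(c),e_\alpha\rangle=f(c)\qquad(c\in A).
\]

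The key computation evaluates $f(ca)$ in two ways. By the weak$^*$ convergence and cyclicity,
\[
f(ca)=-\lim_\alpha\langle D(e_\alpha),ca\rangle=\lim_\alpha\langle D(ca),e_\alpha\rangle .
\]
Applying the derivation identity with the product $ca$ gives
\[
\langle D(ca),e_\alpha\rangle=\phi(a)\langle D(c),e_\alpha\rangle+\langle D(a),e_\alpha c\rangle .
\]
As $\alpha\to\infty$, the first term tends to $\phi(a)f(c)$ by the observation above. The second term tends to $\langle D(a),c\rangle$, because $e_\alpha c\to c$ in norm; this is exactly where the left, rather than right, approximate identity is used. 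Hence
\[
f(ca)=\phi(a)f(c)+\langle D(a),c\rangle,
\]
that is, $D=\mathrm{ad}_f$. So $A$ is $\phi$-cyclic amenable, and since $\phi$ was arbitrary, $A$ is character cyclic amenable. The case $\phi=0$ needs no separate treatment, because the terms containing $\phi$ simply vanish.

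The only delicate point I expect is arranging the order of the cyclic swaps so that the approximate identity lands on the left of the fixed element, giving $e_\alpha c\to c$. A naive attempt produces terms like $c e_\alpha$, which need not converge. For the second assertion, $A^\sharp$ has an identity, which is in particular a bounded left approximate identity, so the first part applies to $A^\sharp$.
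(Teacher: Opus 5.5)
Your proof is correct, but it takes a different route from the paper's. The paper computes nothing. It quotes Theorem 2.2 of \cite{110}, which says that an algebra with a bounded left approximate identity is weakly character amenable: every continuous derivation $A\to({}_{\phi}A)^*$ is inner for every $\phi\in\Delta(A)\cup\{0\}$. It then notes that this trivially covers the $\phi$-cyclic derivations. You instead build the implementing functional explicitly, as $f=-g$ with $g$ a weak$^*$ cluster point of $D(e_\alpha)$, and verify $D=\mathrm{ad}_f$ from
\[
\langle D(ca),e_\alpha\rangle=\phi(a)\langle D(c),e_\alpha\rangle+\langle D(a),e_\alpha c\rangle .
\]
The module actions, signs and limits all check out.

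Cyclicity is not essential in your argument. You use it only to rewrite $\langle D(c),e_\alpha\rangle$ as $-\langle D(e_\alpha),c\rangle$, so that $f$ can be described through $D(e_\alpha)$. Suppose instead you let $f$ be a weak$^*$ cluster point of the bounded functionals $c\mapsto\langle D(c),e_\alpha\rangle$, that is, of $D^*(\hat e_\alpha)$. The same limit computation then gives $D=\mathrm{ad}_f$ for an arbitrary continuous derivation $D:A\to({}_{\phi}A)^*$.

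So the ``delicate point'' you flag, about ordering the cyclic swaps, is not where the content lies. The left approximate identity enters only through $e_\alpha c\to c$ in the last term of the derivation identity. Once cyclicity is removed, your computation is in effect a proof of the stronger weak character amenability statement that the paper cites.

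The paper's route is shorter and records the stronger conclusion, but it depends on an external theorem. Yours is self-contained. Both handle $A^\sharp$ the same way, since its identity is a bounded left approximate identity.
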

\begin{proof}
Suppose that $A$ has a bounded left approximate identity. By Theorem 2.2 of \cite{110}, $A$ is weakly character amenable. On the other hand, every weakly character amenable Banach algebra is character cyclic amenable. Hence, $A$ is character cyclic amenable.

Since the unitization $A^\sharp$ is a unital Banach algebra, it has a bounded left identity and, consequently, a bounded left approximate identity. Therefore, by the first part, $A^\sharp$ is character cyclic amenable. 
\end{proof}
It is known that the Banach algebra $A=\ell^2(\mathbb{N})$ is not weakly $0$-amenable (see Example 2.3 of \cite{110}). However, as we show in the following example, $A$ is $0$-cyclic amenable and, indeed, character cyclic amenable.

\begin{example}
Let $A = \ell^2(\mathbb{N})$ be endowed with the pointwise multiplication  \[
(ab)(n)=a(n)b(n),\qquad a,b\in A,\ n\in\mathbb{N}.
\]
For each $n\in\mathbb{N}$, let $e_n$ denote the sequence defined by
$$
e_n(k)=
\begin{cases}
1,& k=n,\\
0,& k\neq n.
\end{cases}
$$
Then
$$
e_ne_m=
\begin{cases}
e_n,& n=m,\\
0,& n\neq m.
\end{cases}
$$
The sequence $u_N=\sum_{n=1}^{N}e_n,~ N\in\mathbb{N},$ is a left approximate identity for $A$. Indeed, for every $a\in A$, $$||u_Na-a||_2^2=\sum_{n>N}|a(n)|^2\longrightarrow 0.$$
However,
$||u_N||_2=\sqrt{N}\longrightarrow\infty,$ and hence $A$ has no bounded left approximate identity.
We claim that $A$ is character cyclic amenable. First, let $\phi=0$, and let
$ D:A\longrightarrow(_0A^*)$ be a $0$-cyclic derivation. Put $D(e_n)=f_n$. Since $e_n^2=e_n$, the derivation identity gives
$f_n=e_n\cdot f_n.$
Consequently, $f_n=c_ne_n^*$ for some $c_n\in\mathbb{C}$. Since $D$ is $0$-cyclic, we have
$$0=\langle e_n,D(e_n)\rangle+
\langle e_n,D(e_n)\rangle
=2c_n.
$$
Thus $c_n=0$, and hence $D(e_n)=0~(n\in\mathbb{N}).$  Since $\operatorname{span}\{e_n:n\in\mathbb{N}\}$ is dense in $A$ and $D$ is continuous, it follows that $D=0$. Therefore every $0$-cyclic derivation is inner.

Now let $\phi=\phi_s\in\Delta(A)$, where $\phi_s(a)=a(s)$ for some $s\in\mathbb{N}$. Let
$D:A\longrightarrow(_{\phi_s}A)^*$
be a  $\phi_s$-cyclic derivation. Write $D(e_n)=f_n.$ For $n\neq s$, since $e_n^2=e_n$, we obtain
$f_n=e_n\cdot f_n,$ and hence $f_n=c_ne_n^*$ for some $c_n\in\mathbb{C}$.
On the other hand, since $e_ne_s=0$ for $n\neq s$, the derivation identity yields
$$0=e_n\cdot D(e_s)+D(e_n)\cdot e_s
=e_n\cdot f_s+f_n. $$
Thus $f_n=-f_s(n)e_n^*.$ Since $D$ is $\phi_s$-cyclic,
$$0=\langle e_n,D(e_s)\rangle+
\langle e_s,D(e_n)\rangle
=f_s(n)$$ for every $n\neq s$. Hence $f_s(n)=0~(n\neq s). $
Moreover, applying the cyclic condition to $e_s$ and itself gives $$0=2\langle e_s,D(e_s)\rangle=2f_s(s),$$
so $f_s(s)=0$. Therefore $f_s=0$, and consequently $f_n=0$ for every $n\neq s$. It follows that
$D(e_n)=0~(n\in\mathbb{N}).$ By density and continuity, $D=0$. Thus every $\phi_s$-cyclic derivation is inner.
Since $\Delta(A)=\{\phi_s:s\in\mathbb{N}\},$ we conclude that $A=\ell^2(\mathbb{N})$ is character cyclic amenable,
although it has an unbounded approximate identity.
\end{example}

The following example presents a Banach algebra that is character cyclically amenable but not cyclically amenable.
\begin{example}
Let $A$ be the two-dimensional Banach algebra $\Bbb C^2$  equipped with the norm $||(x,y)|| = |x| + |y|$ and the zero multiplication $(x,y)(u,v)=0~(x,y), (u,v)\in A.$ Consider the unitization $B = A^\sharp $. The multiplication on $B$ is defined by $$(x,\lambda)(y,\mu)=(\lambda y+\mu x,\lambda \mu)~(x,y\in A, \lambda, \mu\in \Bbb C).$$ We equip $B$ with the norm $||(x,\lambda)||=||x||+|\lambda|.$ We now prove that $B$ is not cyclically amenable. Define a skew-symmetric bilinear form $\omega$ on $A$ by  $$\omega((x_1,x_2),(y_1,y_2))=x_1y_2-x_2y_1~((x_1,x_2),(y_1,y_2)\in A).$$ Define a continuous linear mapping  $D:B\longrightarrow B^*$ by 
$$\left\langle D(x,\lambda),(y,\mu)\right\rangle=\omega(x,y)~(x,\lambda), (y,\mu)\in B.$$ Clearly, $D$ is nonzero.  Since $\omega$ is skew-symmetric, we have $\omega(y,x) =-\omega(x,y)$. It follows that  $D$ is a derivation. Moreover, $D$ is cyclic. Indeed, for arbitrary $(x,\lambda)$ and $(y,\mu)\in B$, 
$$\left\langle D(x,\lambda),(y,\mu)\right\rangle+\left\langle D(y,\mu),(x,\lambda)\right\rangle=\omega(x,y)+\omega(y,x)=0.$$ Hence $D$ is a cyclic derivation.  Since $B$ is commutative, it follows that, for every $f$ in $B^*$, the corresponding inner derivation is identically zero; that is  $\delta_f(a) = a\cdot f-f\cdot a = 0.$ The fact that $D$ is nonzero implies that $D$ cannot be inner. Thus, $B$ admits a non-inner cyclic derivation from $B$ into $B^*$. Consequently, $B$ is not cyclically amenable. Moreover, $B$ is a unital Banach algebra and hence has a bounded left approximate identity. Therefore, by Theorem \ref{the 2.3},  $B$ is  character cyclic amenable.
\end{example}

\begin{example}\label{exm 3.5}
Let $A=\mathbb{C}^{2}$ be equipped with the norm $||(x_{1},x_{2})||=|x_{1}|+|x_{2}|$
and the zero multiplication $$(x_{1},x_{2})(y_{1},y_{2})=(0,0),
\qquad (x_{1},x_{2}),(y_{1},y_{2})\in A.$$
Then $A$ is a Banach algebra and $\Delta(A)=\varnothing$. Define a continuous linear mapping
$D: A\longrightarrow (_0A)^* $ by
$$
\langle D(x_{1},x_{2}),(y_{1},y_{2})\rangle
=x_{1}y_{2}-x_{2}y_{1},
\qquad (x_{1},x_{2}),(y_{1},y_{2})\in A.$$
Since the multiplication on $A$ is zero, the natural $A$-module actions on $(_0A)^*$ are trivial. Consequently,
$D(xy)=0=x\cdot D(y)+D(x)\cdot y$
for all $x,y\in A$. Moreover, for $x=(x_{1},x_{2}))$ and $y=(y_{1},y_{2})$ in $A$, we have
$$\langle D(x),y\rangle+\langle D(y),x\rangle
=x_{1}y_{2}-x_{2}y_{1}
+y_{1}x_{2}-y_{2}x_{1}
=0.$$
Hence, $D$ is a $0$-cyclic derivation. On the other hand, every inner derivation from $A$ into $(_0A)^{*}$ is zero, since the module actions are trivial. However, $D$ is nonzero. Indeed, for
$e_{1}=(1,0)$ and $ e_{2}=(0,1),$ we have $$\langle D(e_{1}),e_{2}\rangle=1.$$
Therefore, $D$ is a non-inner $0$-cyclic derivation. It follows that $A$ is not $0$-cyclic amenable. Since $0\in\Delta(A)\cup\{0\}$, we conclude that $A$ is not character cyclic amenable.
\end{example}


Recall that the second dual $ A^{**}$ of every Banach algebra $A$ carries two canonical Banach algebra structures, the two Arens products $\Box$ and $\Diamond$, and $A$ embeds isometrically into both $(A^{**},{\Box})$ and $(A^{**},{\Diamond})$ via the canonical injection $k(a)(f)=f(a)k(a)(f)=f(a)$. 

\begin{proposition}\label{prop 2.2}
Suppose that  $A$ and $B$ are Banach algebras and  $\Gamma_1:A\longrightarrow B$ and $ \Gamma_2:B\longrightarrow A$ are continuous homomorphisms such that $\Gamma_1\circ \Gamma_2=I_B$, then the following statements are valid:
\begin{itemize}
\item[(i)] If $A$ is  character cyclic amenable, then so is $B$;
\item[(ii)] If $A^{**}$ is  character cyclic amenable, then so is $B^{**}$.
\end{itemize}
\end{proposition}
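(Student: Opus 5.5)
Part (i) is proved by pulling a $\phi$-cyclic derivation on $B$ back to $A$ along $\Gamma_1$, using the hypothesis on $A$, and pushing the implementing functional forward along $\Gamma_2$. Part (ii) then follows by applying (i) to the second adjoints.

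\textbf{Setup.} Since $\Gamma_1\circ\Gamma_2=I_B$, the map $\Gamma_1$ is surjective. Fix $\phi\in\Delta(B)\cup\{0\}$ and put $\psi=\phi\circ\Gamma_1$. Because $\Gamma_1$ is a surjective homomorphism, $\psi\in\Delta(A)$ when $\phi\neq 0$, and $\psi=0$ when $\phi=0$. Moreover
$$\psi\circ\Gamma_2=\phi\circ\Gamma_1\circ\Gamma_2=\phi.$$
Let $D:B\to({}_\phi B)^*$ be a $\phi$-cyclic derivation, and define $\widetilde D=\Gamma_1^*\circ D\circ\Gamma_1:A\to A^*$, that is,
$$\langle \widetilde D(a),x\rangle=\langle D(\Gamma_1 a),\Gamma_1 x\rangle\qquad (a,x\in A).$$

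\textbf{Key steps for (i).}
\begin{enumerate}
\item $\widetilde D$ is a continuous derivation into $({}_\psi A)^*$. Recall the actions $(b\cdot f)(y)=f(yb)$ and $f\cdot b=\phi(b)f$, and use that $\Gamma_1$ is multiplicative. Then
$$\langle\widetilde D(ab),x\rangle=\langle D(\Gamma_1 b),\Gamma_1x\,\Gamma_1a\rangle+\phi(\Gamma_1 b)\langle D(\Gamma_1a),\Gamma_1x\rangle=\langle a\cdot\widetilde D(b)+\widetilde D(a)\cdot b,\,x\rangle,$$
where the actions on the right are those of $({}_\psi A)^*$.
\item $\widetilde D$ is $\psi$-cyclic. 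This is immediate from the $\phi$-cyclicity of $D$ applied to $\Gamma_1 a$ and $\Gamma_1 x$.
\item Apply the hypothesis. Since $A$ is character cyclic amenable, there is $g\in A^*$ with $\widetilde D(a)=a\cdot g-\psi(a)g$ for all $a\in A$.
\item Transfer back. Put $f=g\circ\Gamma_2\in B^*$. For $b,y\in B$, using $\Gamma_1\Gamma_2=I_B$ and $\psi\circ\Gamma_2=\phi$,
$$\langle D(b),y\rangle=\langle\widetilde D(\Gamma_2 b),\Gamma_2 y\rangle=g(\Gamma_2y\,\Gamma_2 b)-\phi(b)g(\Gamma_2y)=f(yb)-\phi(b)f(y).$$
Hence $D=\mathrm{ad}_f$ is inner. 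As $\phi$ was arbitrary, $B$ is character cyclic amenable.
\end{enumerate}

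\textbf{Part (ii).} Equip $A^{**}$ and $B^{**}$ with the same Arens product, say $\Box$. The second adjoints $\Gamma_1^{**}$ and $\Gamma_2^{**}$ are continuous homomorphisms $(A^{**},\Box)\to(B^{**},\Box)$ and $(B^{**},\Box)\to(A^{**},\Box)$. Taking second adjoints also gives $\Gamma_1^{**}\circ\Gamma_2^{**}=(\Gamma_1\circ\Gamma_2)^{**}=I_{B^{**}}$. Part (i), applied to $A^{**}$, $B^{**}$ and these two maps, then yields (ii).

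\textbf{Main obstacle.} The only non-formal point is that the second adjoint of a continuous homomorphism is multiplicative for the Arens product. This is standard. It follows from weak$^*$-continuity of $\Gamma^{**}$ together with the separate weak$^*$-continuity properties of $\Box$, by approximating first the right factor and then the left factor with nets from $A$ (Goldstine). The corresponding identity for $\Diamond$ follows in the same way.
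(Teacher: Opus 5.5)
Your proof is correct and follows the paper's argument. Both pull $D$ back to $\Gamma_1^*\circ D\circ\Gamma_1$, invoke the hypothesis on $A$, and push the implementing functional forward via $\Gamma_2^*$ (your $g\circ\Gamma_2$); part (ii) then comes from applying the same argument to $\Gamma_1^{**}$ and $\Gamma_2^{**}$. You also make explicit two points the paper leaves implicit: that $\phi\circ\Gamma_1$ is a genuine character because $\Gamma_1$ is surjective, and that second adjoints of continuous homomorphisms are multiplicative for the Arens product.
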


\begin{proof}
(i) Suppose that $A$ is  character cyclic amenable and $\psi\in \Delta(B)\cup \{0\}$. Let $D:B\longrightarrow (_\psi B)^*$  be a $\psi$-cyclic derivation. Define $\tilde{D}:A\longrightarrow (_{\psi\circ \Gamma_1}A)^*$ by $\tilde{D}(a)=\Gamma_1^*\circ D\circ \Gamma_1(a)~(a\in A).$ It is easy to check that $\tilde{D}$ is a $\psi\circ \Gamma_1$-cyclic derivation. Therefore, there exists $f\in A^*$ such that 
\[\tilde{D}(a)=a\cdot f-\psi\circ \Gamma_1(a) f~~~(a\in A).\]
Since  $\Gamma_2^*\circ \Gamma_1^*=I_{B^*}$, for every $b\in B$, we have
\begin{align*}
D(b)&=\Gamma_2^*\circ \Gamma_1^*\circ D\circ \Gamma_1\circ \Gamma_2(b)\\
&=\Gamma_2^*\big(\tilde{D}(\Gamma_2(b))\big)\\
&=\Gamma_2^*\big(\Gamma_2(b)\cdot f-\psi\circ \Gamma_1(\Gamma_2(b))f\big)\\
&=b\cdot \Gamma_2^*(f)-\psi(b)\Gamma_2^*(f).
\end{align*}
This means  $D$ is inner,  so $B$ is $\psi$-cyclic amenable. Therefore, $B$ is character cyclic amenable.

(ii) Since $\Gamma_1^{**}\circ \Gamma_2^{**}=I_{B^{**}}$, the proof is similar to (i).
\end{proof}
\begin{corollary}\label{lemma 2.3}
Let $A$ be a Banach algebra such that $A=B\oplus I$ for a closed subalgebra $B$ and closed ideal $I$. Suppose that $A$ is  character cyclic amenable, then so is $B$.
\end{corollary}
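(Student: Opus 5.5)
The plan is to deduce the corollary directly from Proposition \ref{prop 2.2}(i). I will exhibit continuous homomorphisms $\Gamma_1:A\longrightarrow B$ and $\Gamma_2:B\longrightarrow A$ with $\Gamma_1\circ\Gamma_2=I_B$. For $\Gamma_2$ take the inclusion map $\iota:B\hookrightarrow A$. It is clearly a continuous algebra homomorphism, since $B$ is a closed subalgebra carrying the restricted norm.

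For $\Gamma_1$ take the projection $P:A=B\oplus I\longrightarrow B$, $P(b+i)=b$. Two properties need checking.

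\textbf{Homomorphism.} For $b_1,b_2\in B$ and $i_1,i_2\in I$,
$$(b_1+i_1)(b_2+i_2)=b_1b_2+(b_1i_2+i_1b_2+i_1i_2).$$
Here $b_1b_2\in B$ because $B$ is a subalgebra, and the bracketed term lies in $I$ because $I$ is a two-sided ideal. Uniqueness of the decomposition then gives $P\big((b_1+i_1)(b_2+i_2)\big)=b_1b_2=P(b_1+i_1)P(b_2+i_2)$.

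\textbf{Continuity.} Since $B$ and $I$ are closed subspaces with $A=B\oplus I$ algebraically, the map $B\times I\to A$, $(b,i)\mapsto b+i$, is a continuous linear bijection between Banach spaces. By the open mapping theorem its inverse is bounded, so $P$ is continuous. (If $\oplus$ is meant as a topological direct sum, this step is immediate.)

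Finally, $P\circ\iota(b)=b$ for all $b\in B$, so $\Gamma_1\circ\Gamma_2=I_B$. Proposition \ref{prop 2.2}(i) then yields that $B$ is character cyclic amenable whenever $A$ is.

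The only step needing any care is the continuity of $P$, which the open mapping theorem handles. The rest is routine verification.
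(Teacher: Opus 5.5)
Your proposal is correct and follows the paper's proof: both apply Proposition \ref{prop 2.2}(i) with $\Gamma_1$ the natural projection $A\to B$ and $\Gamma_2$ the inclusion $B\to A$. You also check that the projection is multiplicative, and that it is continuous via the open mapping theorem; the paper states both without proof.
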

\begin{proof}
Suppose that $A$ is  character cyclic amenable. Let  $\Gamma_1:A\longrightarrow B$ be the natural projection and $\Gamma_2:B\longrightarrow A$ be defined by $\Gamma_2(b)=(b,0)~(b\in B)$. Since $\Gamma_1$ and $\Gamma_2$ are continuous homomorphisms, from  Proposition \ref{prop 2.2}, it follows that $B$ is  character cyclic amenable.
\end{proof}


A Banach algebra $A$ is said to be dual if there is a closed submodule $A_*$ of $A^*$ such that $A=(A_*)^*$.
The study of dual Banach algebras provides a natural framework for extending the properties of a Banach algebra $A$ to its second dual $A^{**}$. Given that a dual Banach algebra $A$ is identified with the dual of a closed submodule $A_*\subseteq A^*$ , one may investigate the inheritance of various amenability-type properties between the algebra and its dual space. In particular, when $A$ carries the structure of a dual Banach algebra, the relationship between the character cyclic amenability of $A$ and its second dual becomes of significant interest. The following result demonstrates that the character cyclic amenability of $A^{**}$ effectively descends to the algebra $A$ under the dual structure.

  \begin{theorem}
  Let $A$ be a dual Banach algebra. If $A^{**}$ is  character cyclic amenable, then so is $A$. 
  \end{theorem}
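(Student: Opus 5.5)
Since $A$ is a dual Banach algebra with predual $A_*\subseteq A^*$, I will apply Proposition \ref{prop 2.2}(i), with $A^{**}$ in the role of the algebra assumed to be character cyclic amenable and $A$ in the role of $B$. This requires two continuous homomorphisms $\Gamma_1:A^{**}\longrightarrow A$ and $\Gamma_2:A\longrightarrow A^{**}$ with $\Gamma_1\circ\Gamma_2=I_A$.

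For $\Gamma_2$ I take the canonical embedding $k:A\longrightarrow A^{**}$. It is an isometric homomorphism for either Arens product; I fix the first Arens product $\Box$.

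For $\Gamma_1$ I take the adjoint of the inclusion $\iota:A_*\hookrightarrow A^*$. This gives $\Gamma_1=\iota^*:A^{**}\longrightarrow (A_*)^*=A$, which is just restriction of functionals on $A^*$ to $A_*$. It is continuous with $\|\Gamma_1\|\le 1$. The splitting is immediate: for $a\in A$ and $g\in A_*$ we have $\langle \Gamma_1(k(a)),g\rangle=\langle g,a\rangle$, so $\Gamma_1\circ k=I_A$ under the identification $A=(A_*)^*$.

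The main work is showing that $\Gamma_1$ is multiplicative, i.e. $\Gamma_1(m\Box n)=\Gamma_1(m)\Gamma_1(n)$ for $m,n\in A^{**}$. I would argue by weak$^*$ density.

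\textbf{Continuity of $\Gamma_1$.} Since $\Gamma_1$ is the adjoint of $\iota$, it is weak$^*$--weak$^*$ continuous from $A^{**}$ to $A=(A_*)^*$.

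\textbf{Continuity of the product in $A$.} Because $A_*$ is a closed $A$-submodule of $A^*$, multiplication in $A$ is separately $\sigma(A,A_*)$-continuous. Indeed, for $g\in A_*$ the functional $b\mapsto\langle g,ab\rangle=\langle g\cdot a,b\rangle$ is again given by an element of $A_*$.

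\textbf{Density argument.} Take bounded nets $k(a_\alpha)\to m$ and $k(b_\beta)\to n$ weak$^*$ (Goldstine). Recall $m\Box n=\lim_\alpha\lim_\beta k(a_\alpha b_\beta)$, where the inner limit is weak$^*$ for fixed $\alpha$ and the outer is weak$^*$. First, for fixed $\alpha$, applying $\Gamma_1$ to the $\beta$-limit and using weak$^*$ continuity of $\Gamma_1$ and of left multiplication by $a_\alpha$ gives $\Gamma_1(k(a_\alpha)\Box n)=a_\alpha\Gamma_1(n)$. Then taking the $\alpha$-limit, with continuity of right multiplication by $\Gamma_1(n)$, gives $\Gamma_1(m\Box n)=\Gamma_1(m)\Gamma_1(n)$.

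The delicate point is the bookkeeping of which variable is fixed at each stage. The identity $k(a)\Box n=\text{weak}^*\text{-}\lim_\beta k(ab_\beta)$ uses weak$^*$ continuity of $n\mapsto k(a)\Box n$. The outer limit uses weak$^*$ continuity of $m\mapsto m\Box n$ for fixed $n$, which always holds for $\Box$. Combined with the separate continuity of multiplication in $A$ from the previous step, both stages go through.

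With $\Gamma_1$ and $\Gamma_2$ established as continuous homomorphisms satisfying $\Gamma_1\circ\Gamma_2=I_A$, Proposition \ref{prop 2.2}(i) shows that $A$ is character cyclic amenable whenever $A^{**}$ is.
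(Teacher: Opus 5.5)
Your proposal is correct and is essentially the paper's argument. The paper's map $\mathfrak{D}=i^{**}\circ D\circ i^*$ and its recovery $f=p^*(\mathfrak{F})$ are exactly Proposition \ref{prop 2.2}(i) unrolled with $\Gamma_1=i^*$ and $\Gamma_2$ the canonical embedding, so citing that proposition directly is legitimate, and your weak$^*$ argument that $i^*$ is multiplicative for $\Box$ supplies the step the paper only asserts as ``easily shown''.
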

\begin{proof}
Let $\phi\in \Delta(A)\cup\{0\}$ and $D:A\longrightarrow (_\phi A)^*$ be a $\phi$-cyclic derivation. Let $i:A_*\longrightarrow A^*$ be the canonical embedding and $i^*$ be the adjoint of $i$. Obviously, $i$ is norm-continuous, hence $i^*$ is $w^*$-continuous. Also, one can easily show that 
\begin{eqnarray}\label{2.2}
 i^*(FG)=i^*(F)i^*(G)~~~(F, G\in A^{**}).
\end{eqnarray}
Let $\mathfrak{D}:A^{**}\longrightarrow (_{\phi\circ i^*} A^{**})^*$ defined by $\mathfrak{D}(F)=i^{**}\circ D\circ i^*(F)~(F\in A^{**})$. 
By \eqref{2.2} for every $F, G, H\in A^{**}$, we have 
 \begin{align*}
\langle H,\mathfrak{D}(FG)\rangle &=\langle H,i^{**}\circ D\circ i^*(FG)\rangle\\
&=\langle i^*(H), D\big( i^*(F)i^*(G)\big)\rangle\\
&=\langle i^*(H),\phi\big(i^*(G)\big)D\big(i^*(F)\big)\\
&\quad +i^*(F)\cdot D\big(i^*(G)\big)\rangle\\
&=\langle H,\phi\big(i^*(G)\big)i^{**}\circ D\circ i^*(F)\\
&\quad +i^{**}\big(i^*(F)\cdot D\big(i^*(G)\big)\big)\rangle\\
&=\langle H,\phi\circ i^*(G)\mathfrak{D}(F)+F\cdot\mathfrak{D}(G)\rangle.
\end{align*}
Also 
\begin{align*}
\langle F,\mathfrak{D}(G)\rangle+\langle G,\mathfrak{D}(F)\rangle &=\langle F,i^{**}\circ D\circ i^*(G)\rangle+\langle G,i^{**}\circ D\circ i^*(F)\rangle\\
&=\langle i^*(F), D\big( i^*(G)\big)\rangle+\langle i^*(G), D\big( i^*(F)\big)\rangle=0.
\end{align*}

Thus $\mathfrak{D}$ is a $\phi\circ i^*$-cyclic derivation. Now,  character cyclic amenability of $A^{**}$, implies  that there exists $\mathfrak{F} \in A^{***}$ such that $\mathfrak{D}(F)=F\cdot \mathfrak{F}-\phi\circ i^*(F)\mathfrak{F}~(F\in A^{**}).$ The canonical mapping $p:A\longrightarrow A^{**}$ is an $A$-bimodule morphism, and hence is $p^*:A^{***}\longrightarrow A^*$. Set $f=p^*(\mathfrak{F})$. Then for every $a,b\in A$, we have 
\begin{align*}
\langle b,D(a)\rangle&= \langle i^*(\widehat{b}),D\big(i^*(\widehat{a})\big)\rangle\\
&=\langle \widehat{b},i^{**}\circ D\circ i^*(\widehat{a})\rangle\\
&=\langle \widehat{b},\widehat{a}\cdot \mathfrak{F}-\phi\circ i^*(\widehat{a})\mathfrak{F}\rangle\\
&=\langle p(b), \widehat{a}\cdot \mathfrak{F}-\phi\circ i^*(\widehat{a})\mathfrak{F}\rangle\\
&=\langle b,p^*\big(\widehat{a}\cdot \mathfrak{F}-\phi\circ i^*(\widehat{a})\mathfrak{F}\rangle\big)\rangle\\
&=\langle b, a\cdot f-\phi(a)f\rangle.
\end{align*}
This means that $D$ is inner.  So $A$ is  $\phi$-cyclic amenable and  this completes the proof.
\end{proof}

    \section{Character cyclic amenability of certain Banach algebras}
     We begin this section with the following :
    
   For a Banach algebra $A$, let $A^{op}$ be a Banach algebra, whose underlying Banach space is $A$, but the product is $ \bullet $, where $a\bullet b=ba ~(a,b \in A)$.
   
   \begin{proposition}
   Let $A$ be a Banach algebra and $\phi\in \Delta(A)\cup \{0\}$. Then $A$ is character cyclic amenable if and only if $A^{op}$ is  character cyclic amenable.
   \end{proposition}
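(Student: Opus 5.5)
Since $(A^{op})^{op}=A$ and $\Delta(A^{op})=\Delta(A)$ (a character is multiplicative for $ba$ exactly when it is for $ab$), it suffices to prove one implication: if $A$ is character cyclic amenable, then so is $A^{op}$. I will work throughout with bilinear forms. For $\phi\in\Delta(A)\cup\{0\}$, a bounded linear map $D:A\to(_\phi A)^*$ corresponds to the bounded bilinear form $B(a,x)=\langle D(a),x\rangle$.

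First I will translate the definitions into conditions on $B$. Using $a\cdot f=af$ and $f\cdot a=\phi(a)f$, the map $D$ is a derivation into $(_\phi A)^*$ exactly when
$$B(ab,x)=\phi(b)B(a,x)+B(b,xa)\qquad(a,b,x\in A).$$
It is $\phi$-cyclic exactly when $B$ is skew-symmetric. It is inner, say $D=\mathrm{ad}_f$, exactly when $B(a,x)=f(xa)-\phi(a)f(x)$.

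For $A^{op}$ the same computation, with the product $a\bullet b=ba$, shows the following.
\begin{itemize}
\item $D'$ is a derivation into $(_\phi A^{op})^*$ iff $B'(ba,x)=\phi(b)B'(a,x)+B'(b,ax)$.
\item $D'$ is $\phi$-cyclic iff $B'$ is skew-symmetric.
\item $D'$ is inner iff $B'(a,x)=f(ax)-\phi(a)f(x)$.
\end{itemize}

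Next, given a $\phi$-cyclic derivation $D'$ on $A^{op}$, I will look for a transformation $B'\mapsto B$ that produces a $\phi$-cyclic derivation of $A$. The candidates are $B=\pm B'$, possibly after swapping arguments; by skew-symmetry these all reduce to $B=\pm B'$. Applying character cyclic amenability of $A$ then gives $f\in A^*$ with $B(a,x)=f(xa)-\phi(a)f(x)$.

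Finally, I will rewrite this inner form for $B'$. Because $B'$ is skew, I can exchange the roles of $a$ and $x$. I will then try to check that $B'(a,x)=g(ax)-\phi(a)g(x)$ for a suitable $g$, namely $g=\pm f$, possibly corrected by a multiple of $\phi$ when $\phi\neq0$. This would show $D'$ is inner.

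The main obstacle is the second step: checking that the derivation identity for $A^{op}$ turns into the one for $A$. The two identities differ in where the character falls and in the order $ax$ versus $xa$. Skew-symmetry has to be used, together with a consequence of the derivation law such as a three-term identity of the form $B(ab,c)+B(bc,a)+B(ca,b)=0$. My first move is to derive this kind of identity from skew-symmetry plus the derivation law; it is the analogue of the classical identity for cyclic derivations, now with $\phi$-correction terms.

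If that fails, I will fall back on a direct argument. The idea is that the obstruction $\phi(b)B(a,x)-\phi(a)B(b,x)$ should vanish under cyclicity. To see this, I will evaluate the identity on $b$ and $x$ where $\phi(b)=\phi(x)=1$, and also on $\ker\phi$, and treat the two pieces separately. The case $\phi=0$ should be simpler, since then both derivation laws read $B(ab,x)=B(b,xa)$ and $B(ba,x)=B(b,ax)$, which are related directly through skew-symmetry.
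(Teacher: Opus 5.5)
\textbf{The case $\phi=0$ works, but Step 2 fails for $\phi\neq 0$.} Your argument for $\phi=0$ is fine. For a skew form, the $A^{op}$-law $B'(ba,x)=B'(b,ax)$ gives $B'(ab,x)=-B'(x,ab)=-B'(xa,b)=B'(b,xa)$. So $B=B'$ is a $0$-cyclic derivation of $A$, and $B(a,x)=f(xa)$ together with skew-symmetry gives $B'(a,x)=(-f)(ax)$.

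For $\phi\in\Delta(A)$, however, no choice $B=\pm B'$ can work. Put $T(a,b,x)=B(ab,x)$. Skew-symmetry turns the derivation law for $A$ into $T(a,b,x)+T(x,a,b)=\phi(b)B(a,x)$. Solving this over the three cyclic permutations shows that the $\phi$-cyclic derivations of $A$ are exactly the skew forms with
\[
B(ab,x)=\tfrac12\bigl[\phi(b)B(a,x)+\phi(a)B(b,x)-\phi(x)B(a,b)\bigr].
\]
For $A^{op}$ one gets the same identity with last term $+\phi(x)B'(a,b)$. Hence $\pm B'$ satisfies the law of $A$ only if $\phi(x)B'(a,b)\equiv 0$, i.e.\ only if $B'=0$.

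Here is a concrete instance. Take $A=\operatorname{span}\{E_{11},E_{21}\}\subset M_2(\mathbb C)$, so $E_{11}$ is a right identity, and let $\phi$ be the $(1,1)$-entry. The skew form with $B'(E_{11},E_{21})=1$ is a $\phi$-cyclic derivation of $A^{op}$. But the law for $A$ at $a=b=E_{11}$, $x=E_{21}$ reads $1=2$. Your fallback also fails on this example: the ``obstruction'' $\phi(b)B'(a,x)-\phi(a)B'(b,x)$ equals $1$ at $a=x=E_{11}$, $b=E_{21}$.

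The paper's proof makes the same transfer $\tilde D=i^*\circ D\circ i$ and asserts without checking that it is $\phi$-cyclic. The same example shows that assertion is false for $\phi\neq0$.

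\textbf{The missing idea.} For $\phi\neq 0$ nothing needs to be transferred, because every $\phi$-cyclic derivation of any Banach algebra is already inner.
\begin{enumerate}
\item Fix $u$ with $\phi(u)=1$. For $c,x\in\ker\phi$ the displayed identity gives $B(uc,x)=B(u^2c,x)=\tfrac12B(c,x)$.
\item It also gives $B(u(uc),x)=\tfrac12B(uc,x)=\tfrac14B(c,x)$. Since $u(uc)=u^2c$, this forces $B=0$ on $\ker\phi\times\ker\phi$.
\item Write $\ell=D(u)$, so $\ell(u)=B(u,u)=0$. Skew-symmetry then gives $B(a,x)=\phi(a)\ell(x)-\phi(x)\ell(a)$.
\item Substituting this back into the identity yields $\ell(xa)=\phi(x)\ell(a)$.
\item Hence $\langle D(a),x\rangle=\phi(a)\ell(x)-\ell(xa)$, which says exactly that $D=\mathrm{ad}_{-D(u)}$.
\end{enumerate}
Apply this to both $A$ and $A^{op}$, which have the same characters. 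Both are then $\phi$-cyclic amenable for every $\phi\in\Delta(A)$. The proposition therefore reduces to the case $\phi=0$, which your argument already handles.
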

    
    \begin{proof}
Without loss of generality, assume  $A$ is  $\phi$-cyclic amenable. Let $D:A^{op}\longrightarrow (_{\phi}A^{op})^*$ be a continuous $\phi$-cyclic derivation. Define $\tilde{D}:A\longrightarrow (_\phi A)^*$ by $\tilde{D}(a)=i^*\circ D\circ i(a),$ where $i:A\longrightarrow A^{op}$ is a continuous anti-isomorphism. One can easily check that $\tilde{D}$ is a continuous $\phi$-cyclic derivation. Since $A$ is $\phi$-cyclic amenable, there exists $f\in (_\phi A)^*$ such that $$\tilde{D}(a)=a\cdot f-f\cdot a=a\cdot f-\phi(a)f.$$ Now, for every $a'\in A$, we have: 
    \begin{align*}
\langle a',D(a)\rangle&=\langle a', i^*\circ D\circ i(a)\rangle =\langle a',a\cdot f-\phi(a)f\rangle \\
&=\langle a', f\bullet a-\phi(a)f\rangle=\langle a', f\bullet a-a\bullet f\rangle\\
& =\langle a', a\bullet(-f)-(-f)\bullet a\rangle.
\end{align*}
It follows that  $D=\text{ad}_{-f}$. Therefore, $A^{op}$ is $\phi$-cyclic amenable. Since $(A^{op})^{op}=A$, the converse follows similarly. 
    \end{proof}
    
     It is well known that $A\widehat{\otimes}B$, the projective tensor product of $A$ and $B$ is a Banach algebra with respect to the canonical multiplication defined by 
    $$(a_1\otimes b_1)(a_2\otimes b_2)=(a_1a_2\otimes b_1b_2)~~~(a_1, a_2\in A, b_1, b_2\in B).$$ 
For $f\in A^*$ and $g\in B^*$, let $f\otimes g$ denote the element of $(A\widehat{\otimes}B)^*$ satisfying $(f\otimes g)(a\otimes b)=f(a)g(b)$ for all $a\in A$ and $b\in B$. Then, with this notion, 
 $$\Delta(A\widehat{\otimes}B)=\{\phi\otimes \psi:\phi\in \Delta(A), \psi\in \Delta(B)\}.$$
 
 Next, we examine the behaviour  of character cyclic amenability under tensor products.
    \begin{theorem}\label{Thm 4.2}
    Let $A$ and $B$ be Banach algebras. If $A\widehat{\otimes}B$ is character cyclic amenable, then $A$ and $B$ are character cyclic amenable.
    \end{theorem}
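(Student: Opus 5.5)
The plan is to pull a $\phi$-cyclic derivation on $A$ up to a cyclic derivation on $A\widehat{\otimes}B$ by tensoring with a character of $B$. Then we use character cyclic amenability of $A\widehat{\otimes}B$ and slice the resulting implementing functional back down to $A$. By symmetry (interchanging the roles of $A$ and $B$, using $A\widehat{\otimes}B\cong B\widehat{\otimes}A$ as Banach algebras), it suffices to treat $A$.

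Fix $\phi\in\Delta(A)\cup\{0\}$ and a $\phi$-cyclic derivation $D:A\to({}_\phi A)^*$. Choose $\psi\in\Delta(B)$; here I will assume $\Delta(B)\neq\varnothing$, which is implicit in the description $\Delta(A\widehat{\otimes}B)=\{\phi\otimes\psi\}$. Also fix $b_0\in B$ with $\psi(b_0)=1$. Set $\Phi=\phi\otimes\psi\in\Delta(A\widehat{\otimes}B)\cup\{0\}$, which is $0$ when $\phi=0$. Define
$$\tilde D(a\otimes b)=\psi(b)\,D(a)\otimes\psi ,$$
extended by linearity and continuity; this is well defined and bounded with $\|\tilde D\|\le\|D\|\|\psi\|$, since $(a,b)\mapsto\psi(b)D(a)\otimes\psi$ is bounded bilinear.

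The next step is to check that $\tilde D$ is a $\Phi$-cyclic derivation into $({}_\Phi(A\widehat{\otimes}B))^*$. In $({}_\Phi(A\widehat{\otimes}B))^*$ the right action is $F\cdot x=\Phi(x)F$, and the left action satisfies $(a_1\otimes b_1)\cdot(g\otimes\psi)=(a_1g)\otimes(b_1\psi)=\psi(b_1)(a_1g)\otimes\psi$. Using these, the Leibniz rule for $\tilde D$ on elementary tensors reduces to $\psi(b_1)\psi(b_2)$ times the Leibniz rule $D(a_1a_2)=\phi(a_2)D(a_1)+a_1\cdot D(a_2)$. Cyclicity similarly reduces to $\psi(b_1)\psi(b_2)\big(\langle D(a_1),a_2\rangle+\langle D(a_2),a_1\rangle\big)=0$. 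Linearity and continuity then extend both identities to all of $A\widehat{\otimes}B$. Hence there is $F\in(A\widehat{\otimes}B)^*$ with $\tilde D(x)=x\cdot F-\Phi(x)F$.

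The last step is to slice $F$ back to $A$. For $c\in B$ write $G_c(a')=F(a'\otimes c)$, so $G_c\in A^*$. Pairing $\tilde D(a\otimes b_0)$ with $a'\otimes c$ gives
$$\psi(c)\langle D(a),a'\rangle=G_{cb_0}(a'a)-\phi(a)\psi(b_0)G_c(a')\qquad(a,a'\in A,\ c\in B).$$
This is where the main obstacle lies: the first term involves $G_{cb_0}$ and the second $G_c$, so one does not immediately get $D=\mathrm{ad}_f$ for a single $f$. My first attempt is to exploit the freedom in the choice of $b_0$ and $c$. Comparing the identities for $b_0$ and $b_0^2$, both of which have $\psi$-value $1$, gives $\phi(a)\big(G_{b_0}-G_{b_0^2}\big)=0$; when $\phi\neq0$ this yields $G_{b_0}=G_{b_0^2}$, and iterating gives $G_{b_0^n}=G_{b_0}$. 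Taking $c=b_0$ then gives $\langle D(a),a'\rangle=G_{b_0}(a'a)-\phi(a)G_{b_0}(a')$, i.e. $D(a)=a\cdot f-\phi(a)f$ with $f=G_{b_0}$.

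The case $\phi=0$ must be handled separately, because then $\Phi=0$ kills the right action and the comparison trick degenerates. There I would instead use $\tilde D(x)=x\cdot F$ directly. Alternatively, if the $b_0$-mismatch persists, I would replace $F$ by $F-\Phi$-invariant corrections, which is allowed since inner implementers are determined only modulo such corrections. The point of either route is to arrange $G_{cb_0}=G_c$ on the relevant range and so obtain the same formula with $\phi=0$.
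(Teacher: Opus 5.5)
Your proposal follows the paper's argument. Your lift $\tilde D(a\otimes b)=\psi(b)\,D(a)\otimes\psi$ is exactly the paper's map $\langle a'\otimes b',D(a\otimes b)\rangle=\langle a',d(a)\rangle\psi(b'b)$, where the paper's $d$ is your $D$. The paper then slices with $f=F(\,\cdot\otimes b_0^2)$, i.e.\ $f=G_{b_0^2}$.

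The mismatch you isolated is genuine, and the paper passes over it. In its last computation the term $\langle a'\otimes b_0,\phi(a)\psi(b_0)F\rangle$ equals $\phi(a)G_{b_0}(a')$. The paper nevertheless writes it as $\phi(a)\langle a',f\rangle=\phi(a)G_{b_0^2}(a')$. So your extra step is not optional: it is what makes the proof complete.

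You should state the comparison precisely. The general identity is
\[
\psi(b)\psi(c)\langle D(a),a'\rangle=G_{cb}(a'a)-\phi(a)\psi(b)\,G_c(a')\qquad(a,a'\in A,\ b,c\in B).
\]
The comparison that works is the crossed one: $(b,c)=(b_0,b_0^2)$ against $(b,c)=(b_0^2,b_0)$. Both pairs have $cb=b_0^3$ and left-hand side $\langle D(a),a'\rangle$, so subtracting gives $\phi(a)\big(G_{b_0}-G_{b_0^2}\big)=0$. Changing the input from $b_0$ to $b_0^2$ while keeping the same $c$ would only show that $G_{cb_0}-G_{cb_0^2}$ vanishes on products. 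Then $(b,c)=(b_0,b_0)$ gives $D(a)=a\cdot f-\phi(a)f$ with $f=G_{b_0^2}=G_{b_0}$. No iteration over $b_0^n$ is needed.

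Your handling of $\phi=0$ is needlessly hedged. Nothing has to be arranged there, because the offending term carries the factor $\phi(a)$. With $(b,c)=(b_0,b_0)$ you get $\langle D(a),a'\rangle=G_{b_0^2}(a'a)$, i.e.\ $D(a)=a\cdot G_{b_0^2}$. This is $\operatorname{ad}_{G_{b_0^2}}$, since the right action on $({}_0A)^*$ is zero. In particular, the paper's choice $f=G_{b_0^2}$ works uniformly for all $\phi$, so you can drop the remark about ``$\Phi$-invariant corrections''.

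Finally, the paper's proof also uses the assumption $\Delta(B)\neq\varnothing$ tacitly, and $\Delta(A)\neq\varnothing$ for the statement about $B$. Making this explicit is to your credit rather than a defect of your argument.
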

   
   \begin{proof}
   let $\phi \in\Delta(A)\cup \{0\}$,  $\psi\in \Delta(B)$ and $b_0\in B$  be such that $\psi(b_0)=1$. Assume that $d:A\longrightarrow (_\phi A)^*$ is a $\phi$-cyclic derivation. Define $D:A\widehat{\otimes}B \longrightarrow (_{(\phi\otimes\psi)}A\widehat{\otimes}B)^*$ by 
   $$\langle a'\otimes b',D(a\otimes b)\rangle=\langle a',d(a)\rangle \psi(b'b) ~(a,a'\in A, b, b'\in B).$$ For every $a_1, a_2,a'\in A$ and $b_1, b_2, b'\in B$, one can verify that: 
   \begin{align*}
\langle a'\otimes b',& a_1\otimes b_1\cdot D(a_2\otimes b_2)\rangle +\langle a'\otimes b', D(a_1\otimes b_1)\cdot a_2\otimes b_2\rangle \\
&=\langle a'a_1\otimes b'b_1, D(a_2\otimes b_2)\rangle+\langle a'\otimes b', \phi(a_2)\psi(b_2)D(a_1\otimes b_1)\rangle\\
&=\langle a'a_1, d(a_2)\rangle\psi(b'b_1b_2)+\langle a', d(a_1)\rangle\phi(a_2)\psi(b'b_1b_2)\\
& =\langle a', a_1\cdot d(a_2)+\phi(a_2)d(a_1)\rangle\psi(b'b_1b_2)\\
&=\langle a',d(a_1a_2)\rangle \psi(b'b_1b_2)=\langle a'\otimes b', D(a_1\otimes b_1a_2\otimes _2).
\end{align*}
Also
$$\langle a'\otimes b', D(a\otimes b)\rangle+\langle a\otimes b, D(a'\otimes b')\rangle=\langle a',d(a)\rangle +\langle a,d(a')\rangle \psi(bb')=0.$$
Thus $D$ is a $\phi\otimes\psi$-cyclic derivation. Now character cyclic amenability of $A\widehat{\otimes}B$ implies that, there exists $F\in (_{(\phi\otimes\psi)}A\widehat{\otimes}B)^*$ such that $$D(a\otimes b)=a\otimes b\cdot F-(\phi\otimes\psi)(a\otimes b)F ~(a\in A, b\in B).$$ Define $f:A\longrightarrow \Bbb C$ by $f(a)=F(a\otimes b_0^2)~(a\in A).$ Then for every $a, a'\in A$, we have:
 \begin{align*}
\langle a',d(a)\rangle &=\langle a', d(a)\rangle \psi(b_0^2)=\langle a'\otimes b_0, D(a\otimes b_0)\rangle\\
&=\langle a'\otimes b_0, a\otimes b_0\cdot F-F\cdot a\otimes b_0\rangle\\
&=\langle a'a\otimes b_0^2,F\rangle-\langle a'\otimes b_0,\phi(a)\psi(b_0)F\rangle\\
&=\langle a'a,f\rangle-\phi(a)\langle a',f\rangle=\langle a',a\cdot f-\phi(a)f\rangle.
\end{align*}
So $d(a)=a\cdot f-\phi(a)f$, which implies  $A$ is character cyclic amenable. The same conclusion holds for  character cyclic amenability of $B$.
\end{proof}

 The converse of Theorem \ref{Thm 4.2} does not hold in general.
\begin{example}
 Let $A=\mathbb C$ be endowed with the zero multiplication. Since $\Delta(A)=\varnothing$, it suffices to consider $0$-cyclic derivations. Let
$D:A\longrightarrow (_0A)^*$
be a $0$-cyclic derivation. Since $A$ is one-dimensional, the cyclicity condition gives
$2\langle D(1),1\rangle=0,$ and hence $D=0$. Thus $A$ is character cyclic amenable.
Now let $B=\mathbb C^2$ with coordinatewise multiplication. Since $B$ is a unital Banach algebra, Theorem \ref{the 2.3} implies that $B$ is character cyclic amenable.
However, $A\widehat{\otimes}B$ has zero multiplication. Indeed, for $a,a'\in A$ and $b,b'\in B$,
$$(a\otimes b)(a'\otimes b')=aa'\otimes bb'=0.$$
Moreover, $A\widehat{\otimes}B$ is two-dimensional and is therefore isomorphic, as a Banach algebra, to $\mathbb C^2$ equipped with the zero multiplication. By Example \ref{exm 3.5}, this algebra is not character cyclic amenable. Consequently, $A\widehat{\otimes}B$ is not character cyclic amenable, although both $A$ and $B$ are character cyclic amenable. Hence, the converse of Theorem \ref{Thm 4.2} is false in general.
\end{example}
 
 \begin{proposition}\label{prop3.4}
 Let $A$ and $B$ be Banach algebras, $h:A\longrightarrow B$ be a continuous homomorphism with dense range and  $\psi\in \Delta(B)\cup\{0\}$. If $(_{\psi}B)^*\circ h=(_{\psi\circ h}A)^*$, then  $\psi\circ h$-cyclic amenability of $A$ implies the $\psi$-cyclic amenability of $B$.
 \end{proposition}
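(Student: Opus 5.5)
The plan is to pull a $\psi$-cyclic derivation on $B$ back to $A$ along $h$, use the $\psi\circ h$-cyclic amenability of $A$ to get an implementing functional, push that functional back to $B$ using the hypothesis $(_{\psi}B)^*\circ h=(_{\psi\circ h}A)^*$, and extend the resulting identity from $h(A)$ to $B$ by density.

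\textbf{Pull-back.} Let $D:B\longrightarrow(_{\psi}B)^*$ be a $\psi$-cyclic derivation and define $\tilde D=h^*\circ D\circ h:A\longrightarrow(_{\psi\circ h}A)^*$. First observe that $\psi\circ h\in\Delta(A)\cup\{0\}$, since it is a multiplicative functional (possibly zero). For $a_1,a_2,a'\in A$ we have
\[
\langle a',\tilde D(a_1a_2)\rangle=\langle h(a')h(a_1),D(h(a_2))\rangle+\psi(h(a_2))\langle h(a'),D(h(a_1))\rangle,
\]
and the right-hand side equals $\langle a',a_1\cdot\tilde D(a_2)+\psi\circ h(a_2)\tilde D(a_1)\rangle$. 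So $\tilde D$ is a continuous derivation. It is $\psi\circ h$-cyclic because
\[
\langle a',\tilde D(a)\rangle+\langle a,\tilde D(a')\rangle=\langle h(a'),D(h(a))\rangle+\langle h(a),D(h(a'))\rangle=0.
\]

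\textbf{Transfer the implementing functional.} By $\psi\circ h$-cyclic amenability of $A$ there is $f\in(_{\psi\circ h}A)^*$ with $\tilde D(a)=a\cdot f-\psi(h(a))f$ for all $a\in A$. The hypothesis $(_{\psi}B)^*\circ h=(_{\psi\circ h}A)^*$ says every functional on $A$ has the form $g\circ h$ with $g\in B^*$, so choose $g$ with $f=g\circ h=h^*(g)$. Then for $a,a'\in A$,
\[
\langle h(a'),D(h(a))\rangle=\langle a'a,f\rangle-\psi(h(a))\langle a',f\rangle=\langle h(a'),\,h(a)\cdot g-\psi(h(a))g\rangle .
\]

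\textbf{Extend by density.} Both sides of the last identity are separately continuous in $h(a')$ and in $h(a)$: this uses continuity of $D$, of the module actions on $(_{\psi}B)^*$, and of $\psi$ (characters are automatically continuous). Since $h(A)$ is dense in $B$, the identity extends first in the variable $h(a')$, giving $D(h(a))=h(a)\cdot g-\psi(h(a))g$. It then extends in $h(a)$, giving $D(b)=b\cdot g-\psi(b)g$ for all $b\in B$. Hence $D=\mathrm{ad}_g$ is inner.

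\textbf{Main obstacle.} The one delicate point is interpreting the hypothesis $(_{\psi}B)^*\circ h=(_{\psi\circ h}A)^*$ as the surjectivity of $h^*$ needed to lift $f$ to $g$. Density of $h(A)$ alone only gives injectivity of $h^*$, so this lifting is exactly where the hypothesis is used; the remaining steps are routine verifications.
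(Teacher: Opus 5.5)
Your proposal is correct and follows the paper's proof essentially step for step: pull $D$ back to $h^*\circ D\circ h$, invoke $\psi\circ h$-cyclic amenability of $A$, use the hypothesis $(_{\psi}B)^*\circ h=(_{\psi\circ h}A)^*$ to write the implementing functional as $g\circ h$, and extend $D(b)=b\cdot g-\psi(b)g$ from $h(A)$ to all of $B$ by density. Your explicit check of the derivation and cyclicity identities, and your two-stage density argument (first in $h(a')$, then in $h(a)$), fill in details that the paper leaves as ``one can easily check.''
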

 
 \begin{proof}
 Let $\psi\in \Delta(B)\cup\{0\}$ and $D_B:B\longrightarrow (_{\psi}B)^*$ be a $\psi$-cyclic derivation. Define $D_A:A\longrightarrow (_{\psi\circ h}A)^*$ by $D_A(a)=h^*\circ D_B\circ h(a)~(a\in A)$. One can easily check that $D_A$ is a $\psi\circ h$-cyclic derivation. Thus, there exists $g\in (_{\psi}B)^*$ such that 
 $$D_A(a)=a\cdot (g\circ h)-\psi\circ h(a)(g\circ h)~(a\in A).$$
 Hence, for every $a, a'\in A$ we have:
 \begin{align*}
\langle h(a'),D_B(h(a))&=\langle a',D_A\rangle\\
&=\langle a',a\cdot (g\circ h)-\psi\circ(a)(g\circ h)\rangle\\
&=\langle h(a'),h(a)\cdot g-\psi\circ h(a) g \rangle.
\end{align*}
Since  $h(A)$  is dense in  $B$, this relation extends to all $b\in B$: 
$$D_B(b)=b\cdot g-\psi(b)g~(g\in (_{\psi}B)^*).$$
 That is, $D_B=\text{ad}_g$, which means that $B$ is $\psi$-cyclic amenable.
 \end{proof}
 

For a Banach algebra $A$,  $\phi\in \Delta(A)$  and a closed two-sided ideal $I$ of $A$ such
that $I\subseteq ker(\phi)$, we can extend $\phi$  to $\hat{\phi}\in \Delta(A/I)$  defined by $\hat{\phi}(a + I) = \phi(a)$ for every $a\in A$. Thus, in the next result we study the relation between $\phi$-cyclic amenability  of A and  $\hat{\phi}$-cyclic amenability of $A/I$.
\begin{corollary}
Let $A$ be a Banach algebra,  $\phi\in \Delta(A)$ and $I$ be a closed ideal of $A$ such that  $I\subseteq ker(\phi)$. If $A$ is $\phi$-cyclic amenable, then $A/I$ is $\hat{\phi}$-cyclic amenable.
\end{corollary}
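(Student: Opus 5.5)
This follows from Proposition \ref{prop3.4} applied to the quotient map $h=\pi:A\longrightarrow A/I$, $\pi(a)=a+I$, with $B=A/I$ and $\psi=\hat{\phi}$. The map $\pi$ is a continuous surjective homomorphism, so its range is trivially dense. By construction $\hat{\phi}\circ\pi=\phi$, so the hypothesis that $A$ is $\phi$-cyclic amenable is exactly $\hat{\phi}\circ\pi$-cyclic amenability of $A$. It remains to check the compatibility condition $(_{\hat\phi}(A/I))^*\circ\pi=(_{\phi}A)^*$ in the sense in which it is used in that proof.

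The main obstacle is that this condition cannot hold literally. $(A/I)^*\circ\pi$ is only the annihilator $I^\perp\subseteq A^*$, not all of $A^*$. So I would not invoke Proposition \ref{prop3.4} blindly; instead I would rerun its argument with $\pi$ and see where the condition enters. Let $D:A/I\longrightarrow(_{\hat\phi}(A/I))^*$ be a $\hat\phi$-cyclic derivation, identify $(A/I)^*$ with $I^\perp$, and set $D_A=\pi^*\circ D\circ\pi$. The derivation and cyclicity identities transfer directly: $\pi^*$ is a bimodule map from $(_{\hat\phi}(A/I))^*$ to $(_\phi A)^*$, because $\langle b,\pi^*(a\cdot g)\rangle=\langle \pi(ba),g\rangle$ and $\hat\phi(\pi(a))=\phi(a)$, and the relation $\langle D_A(a),b\rangle+\langle D_A(b),a\rangle=0$ is inherited. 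Hence $D_A(a)=a\cdot f-\phi(a)f$ for some $f\in A^*$.

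The problem is that $f$ need not lie in $I^\perp$, so it does not obviously descend to $A/I$. Note that $D_A$ vanishes on $I$ and takes values in $I^\perp$. For $x\in I$ this gives $x\cdot f=\phi(x)f=0$, i.e.\ $f(bx)=0$ for all $b\in A$, so $f$ annihilates $AI$. For $a\in A$ and $x\in I$ we also get $f(xa)=\phi(a)f(x)$. I would first try to choose $f$ with $f|_I=0$, since then $g=f$ regarded in $(A/I)^*$ satisfies $D(\pi(a))=\pi(a)\cdot g-\hat\phi(\pi(a))g$, and surjectivity of $\pi$ makes $D$ inner. To arrange this, I would look for $u\in A$ with $\phi(u)=1$ and try to modify $f$ by an element killed by the inner-derivation map $f\mapsto(a\mapsto a\cdot f-\phi(a)f)$, so that the modified $f$ annihilates $I$.

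I expect this correction step to be the genuine difficulty. If it fails, the fallback is to state the corollary under the hypothesis of Proposition \ref{prop3.4} as the paper's formulation intends, namely that the implementing functional may be taken in $(A/I)^*\circ\pi$. The density-extension step of Proposition \ref{prop3.4} is then unnecessary, because $\pi$ is onto.
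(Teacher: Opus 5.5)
\paragraph{Verdict: the proposal has a genuine gap.}
You are right that Proposition~\ref{prop3.4} cannot be applied to the quotient map. When $I\neq\{0\}$ we have $(A/I)^*\circ\pi=I^\perp\neq A^*$, so the deduction the paper has in mind (it gives no separate proof) does not go through as stated. However, your proposal stops exactly where the real work begins. The correction step is only announced, never carried out. The fallback then simply assumes that the implementing functional can be chosen in $I^\perp$, which is the conclusion itself. As written, the corollary is not proved.

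\paragraph{The missing idea.}
For a \emph{nonzero} character, cyclicity by itself forces innerness, with an explicit implementing functional. Let $D:B\to({}_\psi B)^*$ be $\psi$-cyclic with $\psi\in\Delta(B)$, and write $\Omega(a,x)=\langle D(a),x\rangle$. Then $\Omega(ab,x)=\psi(b)\Omega(a,x)+\Omega(b,xa)$, and $\Omega$ is skew. Fix $u$ with $\psi(u)=1$ and put $h=D(u)$, so that $h(u)=0$. Using skewness throughout, we get
\begin{align*}
\Omega(a,ux)&=\psi(a)h(x)-h(xa) &&\text{(identity for $\Omega(xa,u)$)},\\
\Omega(a,ux)&=-\psi(x)h(a)+\Omega(a,x)+h(xa) &&\text{(identity for $\Omega(ux,a)$, $\Omega(au,x)$)}.
\end{align*}
Taking $a=u$ in both lines gives $h(xu)=0$ for all $x$. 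Comparing the two lines gives
$$\Omega(a,x)=\psi(a)h(x)+\psi(x)h(a)-2h(xa).$$
Applying this to $au$ and using $h(\cdot\,u)=0$ yields $\Omega(au,x)=\psi(a)h(x)$. On the other hand, the derivation identity gives $\Omega(au,x)=\Omega(a,x)+h(xa)$. Hence $\Omega(a,x)=\psi(a)h(x)-h(xa)$, that is, $D=\operatorname{ad}_{-D(u)}$.

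\paragraph{How this closes the gap.}
Apply this on $B=A/I$ with $\psi=\hat\phi$ and the element $u+I$. This proves the corollary directly, without Proposition~\ref{prop3.4} and without using that $A$ is $\phi$-cyclic amenable. In fact the argument shows that every Banach algebra is $\psi$-cyclic amenable for each $\psi\in\Delta(B)$.

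In your own notation, this is exactly the correction you were looking for. Let $(u\cdot f)(b)=f(bu)$. Then $D_A(u)=u\cdot f-f$, so $\operatorname{ad}_f=D_A=\operatorname{ad}_{f-u\cdot f}$, i.e.\ $\operatorname{ad}_{u\cdot f}=0$. Thus $g=f-u\cdot f$ implements $D_A$. It lies in $I^\perp$ because, as you already observed, $f(xu)=\phi(u)f(x)=f(x)$ for $x\in I$.
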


\begin{corollary}
Let $A$ be a Banach algebra and $B$ be a dense subalgebra of $A$. If $B$ is character cyclic amenable, then so is $A$.
\end{corollary}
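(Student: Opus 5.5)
The plan is to apply Proposition \ref{prop3.4} to the inclusion map $h:B\longrightarrow A$, $h(b)=b$. I regard $B$ as a normed algebra with the norm inherited from $A$, so that $h$ is a continuous homomorphism. Its range is dense because $B$ is dense in $A$. Fix $\psi\in\Delta(A)\cup\{0\}$. I must check two things: that $\psi\circ h=\psi|_B$ belongs to $\Delta(B)\cup\{0\}$, and that the duality hypothesis $(_{\psi}A)^*\circ h=(_{\psi\circ h}B)^*$ of Proposition \ref{prop3.4} holds.

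\textbf{The character condition.} Clearly $\psi|_B$ is multiplicative. If $\psi\neq 0$, then $\psi|_B\neq 0$, since $\psi$ is continuous and $B$ is dense; so $\psi|_B\in\Delta(B)$. If $\psi=0$, then $\psi|_B=0$. In either case, character cyclic amenability of $B$ gives $\psi|_B$-cyclic amenability of $B$.

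\textbf{The duality hypothesis.} Every $f\in A^*$ restricts to $f\circ h\in B^*$. Conversely, since $B$ is dense and carries the norm of $A$, every $g\in B^*$ extends uniquely to a functional in $A^*$ of the same norm. Hence restriction is an isometric bijection $A^*\cong B^*$. It intertwines the module actions $b\cdot f$ and $\psi(b)f$ because these are defined by the same formulas. This identification is exactly the hypothesis $(_{\psi}A)^*\circ h=(_{\psi\circ h}B)^*$. Proposition \ref{prop3.4} then yields that $A$ is $\psi$-cyclic amenable, and since $\psi$ was arbitrary, $A$ is character cyclic amenable.

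\textbf{Direct argument and main obstacle.} For safety, I would also spell out the argument directly. Given a $\psi$-cyclic derivation $D:A\longrightarrow(_{\psi}A)^*$, set $D_B(b)=D(b)|_B$. Cyclicity and the derivation identity restrict immediately, so $D_B$ is a $\psi|_B$-cyclic derivation on $B$. Hence there is $g\in B^*$ with
\[
\langle b',D(b)\rangle=\langle b'b,g\rangle-\psi(b)\langle b',g\rangle \qquad (b,b'\in B).
\]
Let $f\in A^*$ be the continuous extension of $g$. Both sides of this identity are jointly continuous in $(b,b')$, so by density it extends to all $a,a'\in A$, which gives $D(a)=a\cdot f-\psi(a)f$.

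The only real obstacle is the extension step: ensuring the implementing functional lives in $A^*$. This is where I need $B$ to carry the norm of $A$, or more generally a norm dominating it together with a continuous extension of $g$. With the inherited norm the extension is automatic, and I would state the result under that convention.
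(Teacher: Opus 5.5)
Your proposal is correct and follows the paper's proof. Both apply Proposition~\ref{prop3.4} to the inclusion $h:B\longrightarrow A$, using that every functional in $B^*$ extends by density to one in $A^*$, so that $(_{\psi}A)^*\circ h=(_{\psi\circ h}B)^*$. Your additional checks are points the paper leaves implicit, and you handle them correctly: that $\psi|_B\in\Delta(B)\cup\{0\}$, and that the extension step requires $B$ to carry the norm of $A$ (or at least that the implementing functional be continuous for that norm).
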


\begin{proof}
Let $i:B\longrightarrow A$ be the inclusion map and $\phi\in \Delta(A)\cup\{0\}$. Then $i$ is the canonical continuous homomorphism with dense range. For every $g\in B^*$, there exists $f\in A^*$ such that $f|_B=g.$ Thus $f\circ i=g$, and so $(_{\phi}A)^*\circ i=(_{\phi\circ i}B)^*$. Now the result follows from Proposition $\ref{prop3.4}$.
\end{proof}


\begin{theorem}\label{theo 4.7}
Let $A$ be a Banach algebra,  $\phi\in \Delta(A)$ and $I$ be a closed ideal of $A$ such that  $I\subseteq ker(\phi)$.   If  $I$ is $\phi_I$-cyclic amenable and $A/I$ is $\hat{\phi}$-cyclic amenable, then $A$ is $\phi$-cyclic amenable.
\end{theorem}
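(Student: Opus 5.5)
The plan is the usual three-step splitting argument: restrict to $I$, correct by an inner derivation, then pass to the quotient. Let $D:A\longrightarrow(_{\phi}A)^*$ be a $\phi$-cyclic derivation. Since $I\subseteq\ker(\phi)$, the restricted character $\phi_I=\phi|_I$ is $0$. Let $r:A^*\longrightarrow I^*$ be restriction and put $D_I=r\circ D|_I:I\longrightarrow(_{0}I)^*$.

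\textbf{Step 1 (restriction to $I$).} First I check that $D_I$ is a $0$-cyclic derivation. For $x,y,z\in I$,
$$\langle z,D_I(xy)\rangle=\langle zx,D(y)\rangle+\phi(y)\langle z,D(x)\rangle=\langle z,x\cdot D_I(y)\rangle,$$
because $zx\in I$ and $\phi(y)=0$. Cyclicity of $D_I$ is inherited directly from that of $D$. By the hypothesis on $I$ there is $g\in I^*$ with $D_I(x)=x\cdot g$ for all $x\in I$. Extend $g$ by Hahn--Banach to some $f\in A^*$ and set $D'=D-\mathrm{ad}_f$, so $D'(a)=D(a)-a\cdot f+\phi(a)f$. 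Then $D'$ is a continuous derivation into $(_{\phi}A)^*$ with $r(D'(x))=0$ for every $x\in I$.

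\textbf{Step 2 (killing $D'$ on $I$).} I want $D'(I)=0$ and $D'(A)\subseteq I^{\perp}\cong(A/I)^*$. This is the step I expect to be the main obstacle. The difficulty is that $\mathrm{ad}_f$ need not be $\phi$-cyclic: its cyclic defect is $f(ab)+f(ba)-\phi(a)f(b)-\phi(b)f(a)$. So cyclicity of $D$ does not transfer to $D'$, and the identity $\langle b,D'(x)\rangle=-\langle x,D'(b)\rangle$ is not available.

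What I would try first is to use the derivation identities directly. For $x\in I$ and $a,b\in A$, $D'(ax)=a\cdot D'(x)$ (since $\phi(x)=0$) and $D'(xa)=x\cdot D'(a)+\phi(a)D'(x)$. Evaluating these on elements of $I$ and on products $bx$ should force $\langle b,D'(x)\rangle=0$, at least on the closed span of $AI$. Alternatively, I would choose the extension $f$ more carefully, for instance with $f$ vanishing on a complement in a way compatible with cyclicity, so that $D'$ remains cyclic modulo a controllable error. If neither works, I would add a mild hypothesis, such as $\overline{I^2}=I$ or a bounded approximate identity in $I$, to guarantee $D'(I)=0$ and $\langle x,D'(a)\rangle=0$ for $x\in I$.

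\textbf{Step 3 (passage to $A/I$).} Granting Step 2, $D'$ induces $\hat D:A/I\longrightarrow(_{\hat\phi}(A/I))^*$ by $\langle b+I,\hat D(a+I)\rangle=\langle b,D'(a)\rangle$. This is well defined, a derivation, and $\hat\phi$-cyclic whenever $D'$ is cyclic. Since $A/I$ is $\hat\phi$-cyclic amenable, $\hat D=\mathrm{ad}_h$ for some $h\in(A/I)^*=I^{\perp}\subseteq A^*$. Lifting back through the quotient map gives $D'=\mathrm{ad}_h$, hence $D=\mathrm{ad}_{f+h}$ is inner, and $A$ is $\phi$-cyclic amenable.
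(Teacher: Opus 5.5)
Steps 1 and 3 are fine. Step 2 is a genuine gap, and neither of the repairs you sketch closes it.

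\textbf{A counterexample to Step 2.} Take $A=\operatorname{span}\{e,n\}$ with $e^2=e$, $en=n$, $ne=n^2=0$ (the matrix units $E_{11},E_{12}$, with the $\ell^1$-norm). Let $\phi(e)=1$, $\phi(n)=0$ and $I=\mathbb{C}n=\ker\phi$. Both hypotheses hold trivially, since a one-dimensional space carries no nonzero skew form.

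Consider $D=\operatorname{ad}_{n^*}$, where $n^*(e)=0$ and $n^*(n)=1$. Concretely, $\langle e,D(n)\rangle=1=-\langle n,D(e)\rangle$ and $\langle e,D(e)\rangle=\langle n,D(n)\rangle=0$. This is a $\phi$-cyclic derivation with $D_I=0$.

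So $g=0$ is an admissible output of your Step 1, and $f=0$ is an admissible extension, giving $D'=D$. But $\langle e,D'(n)\rangle=1$ and $\langle n,D'(e)\rangle=-1$. Hence $D'(I)\neq 0$ and $D'(A)\not\subseteq I^{\perp}$, even though $n=en\in AI$.

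This shows two things. The derivation identities cannot force vanishing on $\overline{\operatorname{span}}\,AI$. And an arbitrary Hahn--Banach extension is not good enough.

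\textbf{Comparison with the paper.} The paper's proof runs exactly along your Steps 1--3. It settles your Step 2 by asserting that ``one can easily check that $D'$ is a $\phi$-cyclic derivation and $D'|_I=0$'' for an arbitrary extension $\tilde g$. The same example shows that $D'|_I=0$ can fail, so the paper does not supply the missing step either. Adding a hypothesis such as $\overline{I^2}=I$ would change the statement. It would also still not make $D'$ cyclic, which your Step 3 needs.

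\textbf{The missing idea: use cyclicity fully.} Write $B(z,a)=\langle z,D(a)\rangle$. This form is skew and satisfies $B(z,ab)=\phi(b)B(z,a)+B(za,b)$. Apply the derivation rule three times, each time followed by skew-symmetry, to get
\[
2B(c,ab)=\phi(c)B(a,b)+\phi(b)B(c,a)-\phi(a)B(b,c)\qquad(a,b,c\in A).
\]

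Fix $a_0$ with $\phi(a_0)=1$. For $x,y\in\ker\phi$ and any $u$ with $\phi(u)=1$, the identity reads $2B(x,yu)=B(x,y)$. Compare $u=a_0^2$ with two successive uses of $u=a_0$: this gives $B(x,y)=\tfrac12 B(x,y)$. So $B$ vanishes on $\ker\phi\times\ker\phi$.

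Hence $B(z,a)=\phi(z)\beta(a)-\phi(a)\beta(z)$, where $\beta=\langle a_0,D(\cdot)\rangle\in A^*$ and $\beta(a_0)=0$. Substituting this into the derivation rule yields, in turn:
\begin{itemize}
\item $\beta(za)=0$ for $z\in\ker\phi$;
\item $\beta(a_0^2)=0$;
\item $\beta(a_0b)=\beta(b)$.
\end{itemize}
Together these give $\beta(za)=\phi(z)\beta(a)$. Therefore $\langle z,D(a)\rangle=\beta(za)-\phi(a)\beta(z)$, that is, $D=\operatorname{ad}_\beta$.

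\textbf{How this fits your scheme.} This is the right ``careful choice'' of extension. Taking $f=\beta$, its restriction to $I$ is an admissible $g$, and it gives $D'=0$, so Steps 2 and 3 become vacuous.

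Note that no hypothesis on $I$ or $A/I$ is used. For $\phi\in\Delta(A)$, every Banach algebra is $\phi$-cyclic amenable. So the statement is true, but not for the reason the quotient argument (yours or the paper's) suggests.
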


\begin{proof}
Suppose that  $I$ is $\phi_I$-cyclic amenable and $A/I$ is $\hat{\phi}$-cyclic amenable. 
 Let $D:A\longrightarrow (_\phi A)^* $ be a $\phi$-cyclic derivation. Define $D_I:I \longrightarrow (_\phi I)^*$ by $D_I(i)=D(i)~(i\in I)$. By hypothesis, there exists $g_1\in  (_\phi I)^*$ such that $D_I(i)=i\cdot g_1-\phi(i)g_1~(i\in I)$. Since $I$ is a closed subspace of the Banach space $A$, and $g_1 : I \to \mathbb{C}$ is a bounded linear functional, the Hahn--Banach theorem ensures the existence of a bounded linear extension $\tilde{g} : A \to \mathbb{C}$ satisfying $\tilde{g}|_I = g_1$. 
 Now define $D':A\longrightarrow (_\phi A)^*$ by 
 $$ D'(a)=D(a)-(a\cdot \tilde{g}-\tilde{g}\cdot a)~(a\in A). $$
  One can easily check that  $D'$ is a $\phi$-cyclic derivation and $D'|_I=0$. Hence, the map 
  $$\bar{D}:A/I\longrightarrow (_{\hat{\phi}}A/I)^*, ~~~a+I\longrightarrow D'(a),$$ is well-defined. Since $\bar{D}(a+I)\in I^{\perp}=\{f\in A^*: f|_I=0\}$ for every $a\in A$, we have: 
 $$ \langle a_1+I, \bar{D}(a_2+I)\rangle+\langle a_2+I,\bar{D}(a_1+I)\rangle=\langle a_1, D'(a_2)\rangle+\langle a_2, D'(a_1)\rangle=0 $$
for all $a_1, a_2\in A$. It is clear that $\bar{D}$ is $\hat{\phi}$-cyclic derivation. Thus, there exists $h\in (_{\hat{\phi}}A/I)^*$ such that $$\bar{D}(a+I)=(a+I)\cdot h-\hat{\phi}(a+I)h~(a\in A).$$ Define $\tilde{h}$ by  $\tilde{h}(a)=h(q(a))~(a\in A)$, where $q:A\longrightarrow A/I$ be the quotient map. Hence, for every $a\in A$, we have:
$$ D'(a)=\bar{D}(a+I)=(a+I)\cdot h-\hat{\phi}(a+I)h=a\cdot \tilde{h}-\phi(a)\tilde{h}.$$ 
Let $f=\tilde{h}+\tilde{g}$. Then 
$$D(a)=D'(a)+(a\cdot \tilde{g}-\tilde{g}\cdot a)=a\cdot f-\phi(a)f$$
for all $a\in A$. Therefore, $D$ is inner, and   $A$ is $\phi$-cyclic amenable.

\end{proof}


Let $A$ and $B$  be  Banach algebras; it is well known that $A\oplus_1 B$, the $l^1$-direct sum of $A$ and $B$, is a Banach algebra with respect to the canonical multiplication defined by $ (a,b)(c,d):= (ac,bd).$ Since $(A\oplus B)^*=(0\oplus B)^{\perp}$\.{+} $(A\oplus 0)^{\perp}$, where \.{+}  denotes the $l^{\infty}$-direct sum, and $(0\oplus B)^{\perp}$ (resp. $(A\oplus 0)^{\perp}$) is isometrically isomorphic to $A^*$ (resp. $B^*$), for convenience, we write:       $(A\oplus_1 B)^*=A^*$\.+ $B^*.$   Moreover, 
\[\Delta(A\oplus_1 B)=(\Delta(A)\times \{0\})\cup (\{0\}\times \Delta(B)).\]
  \begin{theorem} Let $A$ and $B$  be  Banach algebras. Then  $A\oplus_1 B$ is character cyclic amenable if and only if  $A$ and $B$ are character cyclic amenable.
   \end{theorem}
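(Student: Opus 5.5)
The forward direction follows from Corollary \ref{lemma 2.3}. Indeed, $A\oplus_1 B=(A\oplus 0)\oplus(0\oplus B)$, where $A\oplus 0$ is a closed subalgebra and $0\oplus B$ is a closed ideal, and $A\oplus 0\cong A$. The roles of the two summands can be swapped, since both are closed ideals. Alternatively, apply Proposition \ref{prop 2.2}(i) with $\Gamma_1$ the coordinate projection and $\Gamma_2$ the canonical embedding. So if $A\oplus_1 B$ is character cyclic amenable, then so are $A$ and $B$.

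For the converse, let $\Phi\in\Delta(A\oplus_1 B)\cup\{0\}$. Then $\Phi=(\phi,0)$ with $\phi\in\Delta(A)$, or $\Phi=(0,\psi)$ with $\psi\in\Delta(B)$, or $\Phi=0$. In every case $\Phi(a,b)=\phi(a)+\psi(b)$ with $\phi\in\Delta(A)\cup\{0\}$ and $\psi\in\Delta(B)\cup\{0\}$, at least one of them zero.

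Let $D$ be a $\Phi$-cyclic derivation and use $(A\oplus_1 B)^*=A^*\dot{+}B^*$. Define
\[
d_A(a)=D(a,0)|_{A},\qquad d_B(b)=D(0,b)|_{B},
\]
together with the cross terms $D(a,0)|_B$ and $D(0,b)|_A$.

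The key step is to show that the cross terms vanish. Cyclicity gives $\langle (0,b),D(a,0)\rangle=-\langle (a,0),D(0,b)\rangle$. The derivation identity with left action $x\cdot F$ and right action $F\cdot x=\Phi(x)F$ gives, using $(a,0)(0,b)=0$,
\[
0=(a,0)\cdot D(0,b)+\Phi(0,b)D(a,0),
\]
and evaluating on $(a',b')$ yields $\langle (a'a,0),D(0,b)\rangle+\psi(b)\langle(a',b'),D(a,0)\rangle=0$. Symmetrically,
\[
0=(0,b)\cdot D(a,0)+\phi(a)D(0,b).
\]
Taking $\Phi=(\phi,0)$, the second identity evaluated at $(a',0)$ gives $\phi(a)\langle (a',0),D(0,b)\rangle=0$, so $D(0,b)|_A=0$ whenever $\phi\neq0$. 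Cyclicity then gives $D(a,0)|_B=0$. When $\phi=\psi=0$, I would write $(0,b)$ as a product if possible; this fails in general. Instead, I would combine cyclicity with the identity $\langle (a'a,0),D(0,b)\rangle=0$. This kills the cross term only on $A^2$, which is the expected obstacle. If it does not vanish, the fallback is to absorb the cross term into an inner derivation, or to show that it is itself a $\Phi$-cyclic derivation that is inner by a direct argument. I would check this case carefully first.

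Once the cross terms are controlled, $d_A$ is a $\phi$-cyclic derivation into $({}_\phi A)^*$ and $d_B$ is a $\psi$-cyclic derivation into $({}_\psi B)^*$. This follows by restricting the derivation and cyclic identities to each coordinate, since $D((a,0)(a',0))=D(aa',0)$. By hypothesis, $d_A(a)=a\cdot f-\phi(a)f$ and $d_B(b)=b\cdot g-\psi(b)g$. Setting $F=(f,g)$ and checking coordinatewise that $D(a,b)=(a,b)\cdot F-\Phi(a,b)F$ finishes the proof. The identity is valid because $\Phi(a,b)$ reduces to $\phi(a)$ on the $A^*$-part when $\psi$ is zero, and analogously on the $B^*$-part.
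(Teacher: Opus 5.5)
\paragraph{The converse fails at $\Phi=0$.} Your forward direction is fine. Proposition~\ref{prop 2.2}(i) with the coordinate projection and embedding is the general form of the paper's direct extension $\langle (a',b'),D(a,b)\rangle=\langle a',D_A(a)\rangle$.

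The converse has a genuine gap exactly where you suspected, at $\Phi=0$. It cannot be closed, because the converse is false there. Take $A=B=\mathbb{C}$ with zero multiplication:
\begin{itemize}
\item By the example following Theorem~\ref{Thm 4.2}, each of $A$ and $B$ is character cyclic amenable: $\Delta=\varnothing$, and a $0$-cyclic derivation on a one-dimensional algebra vanishes.
\item But $A\oplus_1 B$ is $\mathbb{C}^2$ with the $\ell^1$-norm and zero product, which is not $0$-cyclic amenable by Example~\ref{exm 3.5}.
\end{itemize}

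In your notation the obstruction is the cross term. For $\Phi=0$, put $\beta(a,b)=\langle (0,b),D(a,0)\rangle=-\langle (a,0),D(0,b)\rangle$. The derivation and cyclicity identities only force $\beta$ to vanish on $\overline{A^2}\times B$ and on $A\times\overline{B^2}$. Conversely, every bounded $\beta$ with these properties gives a $0$-cyclic derivation $\langle (x,y),D(a,b)\rangle=\beta(a,y)-\beta(x,b)$. Now $\langle (x,y),(a,b)\cdot F\rangle=f(xa)+g(yb)$ vanishes when $x=0$ and $b=0$, so this $D$ is inner only if $\beta=0$. Hence neither of your fallbacks (absorbing the cross term into an inner derivation, or showing it is inner directly) can work. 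Taking $\beta(a,b)=ab$ reproduces the derivation of Example~\ref{exm 3.5}.

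What is true is this: if $A,B$ are character cyclic amenable, then $A\oplus_1B$ is $0$-cyclic amenable if and only if $\overline{A^2}=A$ or $\overline{B^2}=B$. The paper's proof misses this because it applies Theorem~\ref{theo 4.7} only to $(\phi,0)$ and $(0,\psi)$ with nonzero characters, and never treats $\Phi=0$.

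\paragraph{The nonzero-character case needs a repair.} For $\Phi=(\phi,0)$ with $\phi\neq0$, your elimination of the cross terms is correct, but the final assembly is not. The right action is $F\cdot(a,b)=\phi(a)(f,g)$ on both coordinates, so
\[
\operatorname{ad}_{(f,g)}(a,b)=\bigl(a\cdot f-\phi(a)f,\ b\cdot g-\phi(a)g\bigr).
\]
The $B^*$-part therefore does not reduce to $b\cdot g$, which is all that $0$-cyclic amenability of $B$ supplies. You need $g=0$, i.e.\ $d_B=0$, and this does hold. Once $D(a,0)|_B=0$, the term $(0,b)\cdot D(a,0)$ vanishes, since it only evaluates $D(a,0)$ at points $(0,b'b)$. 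The identity $0=(0,b)\cdot D(a,0)+\phi(a)D(0,b)$ then forces $D(0,b)=0$, and $F=(f,0)$ works.

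Repaired this way, your direct computation settles the nonzero-character cases without going through Theorem~\ref{theo 4.7}. It uses only $\phi$-cyclic amenability of $A$ (resp.\ $\psi$-cyclic amenability of $B$).
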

  \begin{proof}
  Suppose that  $A\oplus_1 B$ is character cyclic amenable. Let $\phi\in \Delta(A)\cup\{0\}$  and $D_A:A\longrightarrow (_{\phi}A)^*$ be a  $\phi$-cyclic derivation. Define $D:A\oplus_1 B\longrightarrow (_{(\phi,0)}A\oplus_1 B)^*$ by $$\langle (a',b'),D(a,b)\rangle=\langle a', D_A(a)\rangle~(a\in A, b\in B).$$
 It is straightforward to verify that  $D$ is a $(\phi,0)$-cyclic derivation. By the character cyclic amenability of $A\oplus_1 B$,  there exists   $(f,g)\in (_{(\phi,0)}A\oplus_1 B)^*$ such that 
 $$D(a,b)=(a,b)\cdot (f,g)-(\phi,0)(a,b)(f,g)~(a\in A, b\in B).$$ Hence, for every $a, a'\in A$, we have 
\begin{align*}
\langle a',D_A(a)\rangle&=\langle (a',0),D(a,0)\rangle\\
&=\langle (a',0),(a,0)\cdot (f,g)-(\phi,0)(a,0)(f,g)\rangle\\
&=\langle a', a\cdot f-\phi(a)f\rangle.
\end{align*}  
  This implies that  $D_A$ is inner and therefore $A$ is $\phi$-cyclic amenable. Similarly, we may show that $B$  is character cyclic amenable.
   
 Conversely, suppose that $A$ and $B$ are character cyclic amenable. We prove that $A\oplus_1B$ is character cyclic amenable. First, let $\Phi=(\phi,0)~( \phi\in\Delta(A))$. Consider the closed ideal $I={0}\oplus B$.
Clearly, $I\subseteq\ker\Phi$, since $\Phi(0,b)=\phi(0)=0.$  Since $B$ is character cyclic amenable, $B$ is $0$-cyclic amenable. Hence $I$ is $\Phi_I$-cyclic amenable. $(A\oplus_1B)/({0}\oplus B)\cong A$
as Banach algebras. Under this isomorphism, the character induced by $\Phi$ on $(A\oplus_1B)/I$ corresponds precisely to $\phi$. Since $A$ is character cyclic amenable, $(A\oplus_1B)/I$ is $\widehat{\Phi}$-cyclic amenable.
Therefore, by Theorem \ref{theo 4.7}, $A\oplus_1B$ is $(\phi,0)$-cyclic amenable. Similarly, let $\Phi=(0,\psi),~( \psi\in\Delta(B)).$
Taking $I=A\oplus{0},$
we have  $I\subseteq\ker\Phi,$ and $(A\oplus_1B)/I\cong B.$
Since $A$ is $0$-cyclic amenable and $B$ is $\psi$-cyclic amenable, Theorem \ref{theo 4.7} implies that $A\oplus_1B$ is $(0,\psi)$-cyclic amenable. Therefore, $A\oplus_1B$ is character cyclic amenable.
 \end{proof}


Let $A$ and $B$ be Banach algebras with $\Delta(B)\neq 0$ and let $\theta\in\Delta(B)$. Then the direct product $A\times B$ equipped with the algebra multiplication 
$$(a,b)\cdot (a',b')=(aa'+\theta(b')a+\theta(b)a',bb'),\hspace{.5cm}(a,a'\in A, b,b'\in B),$$ 
and with the norm $\|(a,b)\|=\|a\|+\|b\|$ is a Banach algebra which is called the $\theta$-Lau product of $A$ and $B$,  denoted by $A\times_{\theta}B$. This type of product was introduced by Lau \cite{1110} for a certain class of Banach algebras and was extended by Sangani Monfared \cite{103} to the general case. For example, the unitization $A^\sharp=A\times_{\theta}\Bbb C$ of $A$ can be regarded as the $\theta$-Lau product of $A$ and $\Bbb C$, where $\theta:\Bbb C\longrightarrow \Bbb C$ is the identity map. 
  
It is easy to verify that the dual space $(A\times_{\theta}B)^*$ can be identified with $A^* \oplus_\infty B^*$ under the canonical pairing 
$\langle(f,g),(a,b)\rangle=f(a)+g(b)~(a\in A, b\in B, f\in A^*, g\in B^*).$ Also, for $F=(f,g)\in (A\times_{\theta}B)^* = A^* \oplus_\infty B^*,$ the left and right dual module actions are defined as follows:
$$ (a,b)\cdot(f,g) = \big(af + \theta(b)f, \; f(a)\theta + bg\big) $$
and
$$(f,g)\cdot(a,b) = \big(fa + \theta(b)f, \; f(a)\theta + gb\big),$$
for all $a\in A$ and $b\in B$.

    
  To prove our next result, we need to quote the following proposition from \cite{103}.
 \begin{proposition}\label{prop 3.1}
  Let $A$ and $B$ be two Banach algebras with $\Delta(B)\neq 0$. Let $\theta\in\Delta(B), ~E=\{(\varphi,\theta):\varphi\in \Delta(A)\}$ and $F=\{(0,\psi):\psi\in \Delta(B)\}.$
  Set $E=\emptyset$ if $\Delta(A)=\emptyset$. Then $\Delta(A\times_{\theta} B)=E\cup F.$
  \end{proposition}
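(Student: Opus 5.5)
We need to prove Proposition~\ref{prop 3.1}: $\Delta(A\times_\theta B)=E\cup F$. Write $\Phi\in\Delta(A\times_\theta B)$ as $\Phi(a,b)=\varphi(a)+\psi(b)$, where $\varphi=\Phi|_{A\times\{0\}}$ and $\psi=\Phi|_{\{0\}\times B}$ are bounded linear functionals. This uses the identification $(A\times_\theta B)^*=A^*\oplus_\infty B^*$. The plan has two parts: translate multiplicativity of $\Phi$ into conditions on the pair $(\varphi,\psi)$, and then check that every pair in $E\cup F$ is indeed a nonzero character.

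\emph{Multiplicativity conditions.} We test multiplicativity on the three kinds of products.
\begin{itemize}
\item[(1)] From $(0,b)(0,b')=(0,bb')$ we get $\psi(bb')=\psi(b)\psi(b')$, so $\psi\in\Delta(B)\cup\{0\}$.
\item[(2)] From $(a,0)(a',0)=(aa',0)$ we get $\varphi(aa')=\varphi(a)\varphi(a')$, so $\varphi\in\Delta(A)\cup\{0\}$.
\item[(3)] From $(a,0)(0,b)=(\theta(b)a,0)$ we get $\theta(b)\varphi(a)=\varphi(a)\psi(b)$ for all $a\in A$ and $b\in B$. The product $(0,b)(a,0)$ gives the same condition.
\end{itemize}

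\emph{Case analysis.} If $\varphi\neq0$, choose $a$ with $\varphi(a)\neq0$. Then (3) forces $\psi=\theta$, so $\Phi=(\varphi,\theta)\in E$. If $\varphi=0$, then $\Phi=(0,\psi)$, and $\Phi\neq0$ forces $\psi\neq0$, so $\psi\in\Delta(B)$ and $\Phi\in F$. When $\Delta(A)=\emptyset$ only the second case can occur, which is consistent with the convention $E=\emptyset$.

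\emph{Converse.} We verify directly on a general product
$$(a,b)(a',b')=(aa'+\theta(b')a+\theta(b)a',\,bb').$$
For $(\varphi,\theta)$ the value on this product is
$$\varphi(a)\varphi(a')+\theta(b')\varphi(a)+\theta(b)\varphi(a')+\theta(b)\theta(b'),$$
which factors as $(\varphi(a)+\theta(b))(\varphi(a')+\theta(b'))$. For $(0,\psi)$ the value is $\psi(bb')=\psi(b)\psi(b')$. Both functionals are nonzero, so both lie in $\Delta(A\times_\theta B)$.

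Every step is routine. The only point needing care is the converse computation for $(\varphi,\theta)$, where one must see that the cross terms $\theta(b')\varphi(a)+\theta(b)\varphi(a')$ are exactly what the factorization requires.
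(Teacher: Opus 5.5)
Your proof is correct and complete. The three test products make $\varphi$ and $\psi$ multiplicative and give $\varphi(a)\bigl(\theta(b)-\psi(b)\bigr)=0$, which splits cleanly into the $E$ and $F$ cases, and the factorization $(\varphi(a)+\theta(b))(\varphi(a')+\theta(b'))$ settles the converse. The paper gives no proof of its own here --- it quotes the result from Sangani Monfared \cite{103} --- and your direct computation is the standard elementary argument for it.
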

  

\begin{proposition}\label{prop 4.10}
   Let $A$ and $B$ be Banach algebras, let $\theta\in\Delta(B)$, let $\phi\in\Delta(A)$, and put
$C=A\times_\theta B,~ \Phi=(\phi,\theta).$
Let $D:C\longrightarrow(_{\Phi} C)^*$ be a continuous $\Phi$-cyclic derivation. Then there exist continuous maps $d:A\longrightarrow A^*$ and $ \delta:B\longrightarrow B^*$ such that
$$D(a,b)=(d(a),\delta(b))~(a\in A,\ b\in B),$$
   where $d$ is a $\phi$-cyclic derivation and $\delta$ is a $\theta$-cyclic derivation. Moreover, $D$ is inner if and only if there exist $f\in A^*$ and $g\in B^*$ such that
$d(a)=af-\phi(a)f,~a\in A$,~ $\delta(b)=bg-\theta(b)g,~ b\in B,$ and
$f(a)\theta=\phi(a)g\qquad(a\in A).$
\end{proposition}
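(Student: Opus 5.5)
The plan is to decompose $D$ into components using the identification $(A\times_\theta B)^*=A^*\oplus_\infty B^*$, and to extract the needed vanishing from the derivation identity on mixed products and from cyclicity. First I will record the module structure of $(_{\Phi}C)^*$. The right action is $F\cdot(a,b)=\Phi(a,b)F=(\phi(a)+\theta(b))F$. The left action is the canonical one, $(a,b)\cdot(f,g)=(af+\theta(b)f,\ f(a)\theta+bg)$, as recorded before Proposition \ref{prop 3.1}. Then I write
$$D(a,b)=\bigl(D_1(a,b),D_2(a,b)\bigr),$$
with $D_1:C\to A^*$ and $D_2:C\to B^*$ continuous and linear. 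By linearity, $D(a,b)=D(a,0)+D(0,b)$. So the decomposition $D(a,b)=(d(a),\delta(b))$ amounts to showing two things: $D_1(0,b)=0$ and $D_2(a,0)=0$. I will then set $d(a)=D_1(a,0)$ and $\delta(b)=D_2(0,b)$.

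\emph{Vanishing of the cross terms.} I will use the product $(0,b)(a,0)=(\theta(b)a,0)$. Applying the derivation identity gives
$$\theta(b)D(a,0)=(0,b)\cdot D(a,0)+\phi(a)D(0,b).$$
Since $(0,b)\cdot(f,g)=(\theta(b)f,bg)$, the $A^*$-components yield $\phi(a)D_1(0,b)=0$. Choosing $a$ with $\phi(a)=1$, which is possible because $\phi\in\Delta(A)$ is nonzero, gives $D_1(0,b)=0$ for all $b$. Next I apply $\Phi$-cyclicity to the pair $(a',0),(0,b)$:
$$D_1(0,b)(a')+D_2(a',0)(b)=0.$$
This forces $D_2(a',0)=0$. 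This step is the only one that is not purely formal. It is the main obstacle, because the right module action involves the character $\Phi$ and not multiplication, so it is easy to apply the wrong identity here. I will therefore check the mixed products in both orders; the product $(a,0)(0,b)$ gives a compatible condition that serves as a check.

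\emph{Properties of $d$ and $\delta$.} I restrict the derivation identity to $A\times\{0\}$, using $(a,0)\cdot(f,0)=(af,f(a)\theta)$. The $A^*$-component gives
$$d(aa')=a\cdot d(a')+\phi(a')d(a),$$
so $d$ is a derivation into $(_\phi A)^*$. The $B^*$-component gives an additional constraint, namely $d(a')(a)\theta=0$, which is consistent with the statement; in fact it even forces $d=0$. Restricting to $\{0\}\times B$, where $\Phi(0,b)=\theta(b)$, shows that $\delta$ is a derivation into $(_\theta B)^*$. Cyclicity of $d$ and $\delta$ follows by restricting the $\Phi$-cyclicity identity to pairs in $A\times\{0\}$ and pairs in $\{0\}\times B$, respectively. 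Continuity of $d$ and $\delta$ is inherited from that of $D$.

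\emph{Inner characterization.} For $F=(f,g)$ I will compute
$$(a,b)\cdot F-\Phi(a,b)F=\bigl(af-\phi(a)f,\ f(a)\theta-\phi(a)g+bg-\theta(b)g\bigr).$$
Suppose $D$ equals this expression. Setting $b=0$ yields $d(a)=af-\phi(a)f$ together with $f(a)\theta=\phi(a)g$. Setting $a=0$ yields $\delta(b)=bg-\theta(b)g$. Conversely, under these three conditions the displayed expression equals $(d(a),\delta(b))=D(a,b)$. Hence $D$ is inner.
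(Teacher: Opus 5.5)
Your proof is correct and follows essentially the same route as the paper. Both decompose $D$ via $C^*=A^*\oplus_\infty B^*$, use the mixed product $(0,b)(a,0)=(\theta(b)a,0)$ to get $\phi(a)D_1(0,b)=0$, apply cyclicity to $(a',0),(0,b)$ to kill $D_2(a',0)$, restrict to $A\times\{0\}$ and $\{0\}\times B$, and compute $\operatorname{ad}_{(f,g)}$ directly. Your side remark is also right: the $B^*$-component of the identity on $A\times\{0\}$ forces $d=0$, and the $B^*$-component of the mixed identity likewise gives $\phi(a)\delta(b)=0$, so every such $D$ in fact vanishes.
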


\begin{proof}
 Since $C^*=A^*\oplus_\infty B^*,$
write $$D(a,0)=(d(a),S(a)),~D(0,b)=(T(b),\delta(b)),$$
where
$d:A\to A^*,~ S:A\to B^*,~
T:B\to A^*,~ \delta:B\to B^*$
are continuous linear maps. Thus $$D(a,b)=(d(a)+T(b),S(a)+\delta(b)).$$
  Since $(a,0)(a',0)=(aa',0),$ the derivation property implies that $d$ is a $\phi$-derivation. Similarly, since
$(0,b)(0,b')=(0,bb'),$ the map $\delta$ is a $\theta$-derivation.
We next consider the mixed product. Since $(a,0)(0,b)=(\theta(b)a,0),$ the derivation identity gives
$$D(\theta(b)a,0)=(a,0)\cdot D(0,b)+D(a,0)\cdot(0,b).$$ Using the dual module actions, the first component of the right-hand side is
$\theta(b)d(a)+\phi(a)T(b).$
The first component of the left-hand side is $\theta(b)d(a).$ Consequently,
$\phi(a)T(b)=0~(a\in A,\ b\in B).$
Since $\phi\neq0$, it follows that $T=0.$ On the other hand, the $\Phi$-cyclicity of $D$, applied to $(a,0)$ and $(0,b)$, gives $$\langle D(a,0),(0,b)\rangle+\langle D(0,b),(a,0)\rangle=0.$$
Hence $S(a)(b)+T(b)(a)=0.$ Since $T=0$, we obtain $S=0.$ Therefore $D(a,b)=(d(a),\delta(b)).$ The cyclicity of $D$, restricted to $A\times{0}$ and ${0}\times B$, now yields
$$\langle d(a),a'\rangle+\langle d(a'),a\rangle=0$$ and 
$$\langle\delta(b),b'\rangle+\langle\delta(b'),b\rangle=0. $$
 Thus $d$ is $\phi$-cyclic and $\delta$ is $\theta$-cyclic.   Therefore, $D=\operatorname{ad}_{(f,g)}$ if and only if 
\begin{align*}
(d(a),\delta(b))&=(a,b)\cdot (f,g)-(\phi,\theta)(a,b)(f,g)\\
&=\left(af-\phi(a)f,f(a)\theta-\phi(a)g+bg-\theta(b)g\right),
\end{align*}
for all $a\in A$ and $b\in B$. Thus, $D$ is inner if and only if 
$d(a)=af-\phi(a)f,~\delta(b)=bg-\theta(b)g,$
and $f(a)\theta-\phi(a)g=0$ for all $a\in A$. This proves the result.
\end{proof}


\begin{theorem}\label{thm 4.11}
 Let $A$ and $B$ be Banach algebras, let $\theta\in\Delta(B)$, and suppose that $A$ and $B$ are character cyclic amenable. Then the following assertions hold.
 \begin{itemize}
 \item[(i)] If $A$ has a bounded approximate identity, then  $A\times_\theta B $ is character cyclic amenable.
\item[(ii)] More generally, suppose that, for every $\phi\in\Delta(A), \overline{\operatorname{span}}
\{xy+yx:x,y\in\ker\phi\}
=\ker\phi.$ Then $A\times_\theta B$ is character cyclic amenable.
 \end{itemize}
\end{theorem}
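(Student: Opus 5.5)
The plan is to split according to the characters of $C=A\times_\theta B$. By Proposition \ref{prop 3.1}, $\Delta(C)\cup\{0\}$ consists of three kinds of elements:
\begin{itemize}
\item[(a)] $\Phi=(\phi,\theta)$ with $\phi\in\Delta(A)$;
\item[(b)] $\Phi=(0,\psi)$ with $\psi\in\Delta(B)$;
\item[(c)] $\Phi=0$.
\end{itemize}
Each case is treated separately.

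For case (a), let $D$ be a $\Phi$-cyclic derivation and write $D=(d,\delta)$ as in Proposition \ref{prop 4.10}. I will first show that $\delta=0$. Apply the derivation identity to the product in the reverse order, $(0,b)(a,0)=(\theta(b)a,0)$. The left side is $(\theta(b)d(a),0)$. On the right, $(0,b)\cdot D(a,0)=(\theta(b)d(a),0)$ and $D(0,b)\cdot(a,0)=\phi(a)(0,\delta(b))$. Comparing second components gives $\phi(a)\delta(b)=0$, so $\delta=0$ because $\phi\neq 0$. Next I show that $d=0$. Write $\langle a',d(a)\rangle=\langle a',af-\phi(a)f\rangle$, which is possible since $A$ is $\phi$-cyclic amenable. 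The cyclicity of $d$ then reads
$$f(a'a)+f(aa')=\phi(a)f(a')+\phi(a')f(a)\qquad(a,a'\in A).$$

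Under hypothesis (ii), taking $a,a'\in\ker\phi$ gives $f(xy+yx)=0$. By density and continuity $f$ vanishes on $\ker\phi$, so $f=c\phi$ for some $c\in\Bbb C$. Under hypothesis (i), put $a'=e_\alpha$ from the bounded approximate identity and pass to a subnet along which $f(e_\alpha)\to L$. Since $\phi(e_\alpha)\to 1$, we get $2f(a)=\phi(a)L+f(a)$, hence again $f=L\phi$. In either case, $(a\phi)(b)=\phi(ba)=\phi(a)\phi(b)$ gives $af-\phi(a)f=0$, so $d=0$. Thus $D=0$, which is inner, with $(f,g)=(0,0)$ satisfying the compatibility condition of Proposition \ref{prop 4.10}. (Actually (i) could even be folded into this single computation.)

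For case (b), I will use Theorem \ref{theo 4.7} with the closed ideal $I=A\times\{0\}$. Since $(a,0)(a',0)=(aa',0)$, the ideal $I$ is isomorphic to $A$, and $\Phi|_I=0$. Clearly $I\subseteq\ker\Phi$, and $C/I\cong B$ with $\widehat\Phi$ corresponding to $\psi$. Because $A$ is $0$-cyclic amenable and $B$ is $\psi$-cyclic amenable, Theorem \ref{theo 4.7} gives that $C$ is $(0,\psi)$-cyclic amenable.

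Case (c), $\Phi=0$, is the main obstacle, since Theorem \ref{theo 4.7} needs a nonzero character. I would decompose $D(a,b)=(d(a)+T(b),S(a)+\delta(b))$ directly, exactly as in the proof of Proposition \ref{prop 4.10}, but now for the module $(_0C)^*$. There the left action is $(a,b)\cdot(f,g)=(af+\theta(b)f,\,f(a)\theta+bg)$ and the right action is zero.
\begin{itemize}
\item From the products inside $A\times\{0\}$ and inside $\{0\}\times B$, the map $d$ is a $0$-cyclic derivation on $A$ and $\delta$ is a $0$-cyclic derivation on $B$. Hence $d=\operatorname{ad}f$ and $\delta=\operatorname{ad}g$ by the hypotheses.
\item Subtract $\operatorname{ad}_{(f,g)}$ from $D$, and check that the resulting derivation $D'$ still satisfies the cyclic condition and has $d'=0$ and $\delta'=0$.
\item Then use the two mixed identities for $(a,0)(0,b)$ and $(0,b)(a,0)$, together with the relation $S(a)(b)=-T(b)(a)$ coming from cyclicity. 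The aim is to show that $T$ has the form $T(b)=\theta(b)h$, and then to absorb this remaining piece into an inner derivation, or show it vanishes.
\end{itemize}
I expect the bookkeeping of this last step, namely that the leftover mixed terms are exactly of inner form, to be the delicate part of the argument.
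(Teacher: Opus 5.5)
Your cases (a) and (b) follow the paper's proof. In (a) you kill $\delta$ via $(0,b)(a,0)=(\theta(b)a,0)$ and force $f=c\phi$ from (i) or (ii), exactly as the paper does; your remark that then $d=af-\phi(a)f=0$, so $D=0$, is correct. In (b) you apply Theorem~\ref{theo 4.7} to the ideal $A\times\{0\}$, again as the paper does.

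The gap is case (c), $\Phi=0$. You leave it as a plan, and the plan contains a step that fails. For $\Phi=0$ the right action on $(_0C)^*$ is zero, so
\[
\operatorname{ad}_{(f,g)}(a,b)=(af+\theta(b)f,\ f(a)\theta+bg).
\]
Its cyclic sum on $(a,b),(a',b')$ is
\[
f(a'a+aa')+g(b'b+bb')+2\bigl(\theta(b)f(a')+\theta(b')f(a)\bigr).
\]
Even when $\operatorname{ad}_f$ and $\operatorname{ad}_g$ are cyclic on $A$ and $B$, the cross term survives. So $D'=D-\operatorname{ad}_{(f,g)}$ is $0$-cyclic only when $f=0$, and you lose the relation $S'(a)(b)=-T'(b)(a)$ that you intended to use afterwards.

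The missing step is to analyse the mixed terms of $D$ itself, before subtracting anything. For $\Phi=0$, the identity $D\bigl((a,0)(0,b)\bigr)=(a,0)\cdot D(0,b)$ reads
\[
(\theta(b)d(a),\ \theta(b)S(a))=(aT(b),\ T(b)(a)\theta).
\]
Choose $b_0$ with $\theta(b_0)=1$ and put $h=T(b_0)$. Then $S(a)=h(a)\theta$ and $T(b)=\theta(b)h$. Cyclicity gives $S(a)(b)+T(b)(a)=0$, that is, $2\theta(b)h(a)=0$, so $h=0$. Hence $T=S=0$, and the first component then gives $d=0$. Therefore $D(a,b)=(0,\delta(b))$, where $\delta$ is a $0$-cyclic derivation on $B$. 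So $\delta(b)=bg$ for some $g\in B^*$, and $D=\operatorname{ad}_{(0,g)}$ because $(a,b)\cdot(0,g)=(0,bg)$. The mixed terms vanish rather than being absorbed, and only the $0$-cyclic amenability of $B$ is used.

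You cannot simply fall back on the paper's treatment of this case. The paper applies Theorem~\ref{theo 4.7} with $I=\{0\}\times B$, but that set is not an ideal of $A\times_\theta B$, since $(a,0)(0,b)=(\theta(b)a,0)$. Moreover, Theorem~\ref{theo 4.7} is stated only for nonzero characters. Your instinct to handle $\Phi=0$ directly is right; it just has to be completed as above.
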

\begin{proof}
 Put $C=A\times_\theta B.$ We first prove that $C$ is $0$-cyclic amenable. Let
$I={0}\times B.$ Then $I$ is a closed ideal of $C$, $I\subseteq\ker 0$, and $I$ is isomorphic to $B$. Moreover,
$C/I\cong A.$ Since $A$ and $B$ are character cyclic amenable, both $A$ and $B$ are $0$-cyclic amenable. Hence, by Theorem \ref{theo 4.7}, $C$ is $0$-cyclic amenable. Next, let $\psi\in\Delta(B)$ and put $\Phi=(0,\psi)\in\Delta(C).$ Consider the closed ideal $J=A\times{0}.$ Clearly, $J\subseteq\ker\Phi,~J\cong A,~C/J\cong B.$
The character induced by $\Phi$ on $C/J$ is precisely $\psi$. Since $A$ and $B$ are character cyclic amenable, Theorem \ref{theo 4.7} implies that $C$ is $\Phi$-cyclic amenable. 
 
 It remains to consider $\Phi=(\phi,\theta),~ \phi\in\Delta(A).$
Let $D:C\longrightarrow(_\Phi C)^*$ be a continuous $\Phi$-cyclic derivation. By Proposition 4.11, there exist continuous linear maps $d:A\longrightarrow A^*,~\delta:B\longrightarrow B^*$ such that $D(a,b)=(d(a),\delta(b)),$ where $d$ is a $\phi$-cyclic derivation and $\delta$ is a $\theta$-cyclic derivation.
We first show that $\delta=0.$ For $a\in A$ and $b\in B$, we have $(0,b)(a,0)=(\theta(b)a,0).$
Hence, by the derivation identity,
\begin{align*}
D(\theta(b)a,0)&=(0,b)\cdot D(a,0)+D(0,b)\cdot(a,0)\\
&=(0,b)\cdot (d(a),0)+\phi(a)(0,\delta(b))\\
&=(\theta(b)d(a),\phi(a)\delta(B)).
\end{align*}
 \begin{equation}
\phi(a)\delta(b)=0\qquad(a\in A,\ b\in B).
\tag{4.17}
\end{equation}
Since $\phi\neq0$, there exists $a_0\in A$ such that $\phi(a_0)\neq0.$ Taking $a=a_0$, we obtain
\begin{equation}
\delta(b)=0\qquad(b\in B).
\tag{4.18}
\end{equation}
Thus $\delta=0,$ and therefore $D(a,b)=(d(a),0)$.
Since $A$ is character cyclic amenable and $d$ is a $\phi$-cyclic derivation, $d$ is inner. Hence there exists $f\in A^*$ such that $d(a)=af-\phi(a)f~(a\in A).$ Since $d$ is cyclic, for $a,b\in A$ we have
$$\langle d(a),b\rangle+\langle d(b),a\rangle=0.$$ This gives
\begin{equation}
f(ab)-\phi(a)f(b)+f(ba)-\phi(b)f(a)=0,
\tag{4.19}
\end{equation}
and hence
\begin{equation}
f(ab+ba)=\phi(a)f(b)+\phi(b)f(a)\qquad(a,b\in A).
\tag{4.20}
\end{equation}
We now prove that, under either hypothesis (i) or (ii), $f=c\phi$ for some $c\in\mathbb C$.
Case (i). Suppose that $A$ has a bounded approximate identity $(e_\alpha)$. Taking $b=e_\alpha$ , we obtain
\begin{equation}
f(ae_\alpha+e_\alpha a)=\phi(a)f(e_\alpha)+\phi(e_\alpha)f(a).
\tag{4.22}
\end{equation} 
Since $(e_\alpha)$ is a two-sided approximate identity, $ae_\alpha+e_\alpha a\longrightarrow2a.$ Also,
$\phi(e_\alpha)\longrightarrow1.$ Because $(e_\alpha)$ is bounded and $f\in A^*$, the scalar net
$(f(e_\alpha))$ is bounded. Hence it has a convergent subnet, say
$f(e_{\alpha_\beta})\longrightarrow c$ for some $c\in\mathbb C$. Passing to this subnet, we obtain
$2f(a)=c\phi(a)+f(a),$ and therefore $f(a)=c\phi(a)~(a\in A).$
Case (ii). Suppose that, for every $\phi\in\Delta(A)$,
\begin{equation}
\overline{\operatorname{span}}\{xy+yx:x,y\in\ker\phi\}=\ker\phi.
\tag{4.23}
\end{equation}
Choose $a_0\in A$ such that $\phi(a_0)=1.$ Put $c=f(a_0)$ and define $h=f-c\phi.$ Then
\begin{equation}
h(a_0)=0.
\tag{4.24}
\end{equation}
Since both $f$ and $\phi$ satisfy $f(ab+ba)=\phi(a)f(b)+\phi(b)f(a)~(a,b\in A),$ we obtain
\begin{equation}
h(ab+ba)=\phi(a)h(b)+\phi(b)h(a)\qquad(a,b\in A).
\tag{4.25}
\end{equation}
If $x,y\in\ker\phi$, then $h(xy+yx)=0.$ Therefore $h$ vanishes on
$\overline{\operatorname{span}}\{xy+yx:x,y\in\ker\phi\}.$
So,
\begin{equation}
h|_{\ker\phi}=0.
\tag{4.26}
\end{equation}
For arbitrary $a\in A$,
$$\phi(a-\phi(a)a_0)
=\phi(a)-\phi(a)\phi(a_0)=0.$$
Thus $a-\phi(a)a_0\in\ker\phi,$  and  we obtain
$$h(a)=h(a-\phi(a)a_0)+\phi(a)h(a_0)=0.$$
Hence $h=0$, and consequently $f=c\phi.$ 
We now set $g=c\theta\in B^*.$ Then
$$bg-\theta(b)g=bc\theta-\theta(b)c\theta=0\qquad(b\in B),$$
and, since $f=c\phi$,
$$f(a)\theta-\phi(a)g=c\phi(a)\theta-\phi(a)c\theta=0\qquad(a\in A).$$
Together with $d(a)=af-\phi(a)f$
and $\delta(b)=0=bg-\theta(b)g,$
all the conditions in Proposition \ref{prop 4.10} are satisfied. Hence $D$ is inner.
Therefore every continuous $\Phi$-cyclic derivation $D:C\longrightarrow(_\Phi C)^*$ is inner for every
$\Phi\in\Delta(C)\cup\{0\}.$ Consequently, $A\times_\theta B$ is character cyclic amenable.

\end{proof}


 \begin{theorem}
     Let $A$ and $B$ be Banach algebras, let $\theta\in\Delta(B)$, and suppose that $A$ and $B$ are character cyclic amenable. If $\ker\theta$ is $0$-cyclic amenable, then $A\times_\theta B$ is character cyclic amenable.
     \end{theorem}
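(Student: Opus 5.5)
The plan is to follow the structure of the proof of Theorem \ref{thm 4.11}. The characters $0$ and $(0,\psi)$ with $\psi\in\Delta(B)$ are handled by Theorem \ref{theo 4.7} exactly as there; only the hypotheses that $A$ and $B$ are character cyclic amenable are used for them. So the remaining work is for $\Phi=(\phi,\theta)$ with $\phi\in\Delta(A)$.

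Let $D:C\to(_\Phi C)^*$ be a $\Phi$-cyclic derivation, where $C=A\times_\theta B$. By Proposition \ref{prop 4.10}, $D(a,b)=(d(a),\delta(b))$, with $d$ a $\phi$-cyclic derivation and $\delta$ a $\theta$-cyclic derivation. The mixed-product computation in the proof of Theorem \ref{thm 4.11} gives $\delta=0$. Since $A$ is $\phi$-cyclic amenable, $d(a)=af-\phi(a)f$ for some $f\in A^*$, and cyclicity of $d$ yields
$$f(ab+ba)=\phi(a)f(b)+\phi(b)f(a)\qquad(a,b\in A).$$
By Proposition \ref{prop 4.10}, with $\delta=0$, it suffices to find $g\in B^*$ with $bg-\theta(b)g=0$ for all $b\in B$ and $f(a)\theta=\phi(a)g$ for all $a\in A$. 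If $f=c\phi$, then $g=c\theta$ works as before.

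The difficulty is that the hypothesis now concerns $\ker\theta$ rather than $A$, so it cannot be used directly to force $f=c\phi$. My plan is to exploit the freedom in choosing the implementing functional. Any $(f',g')$ with $d(a)=af'-\phi(a)f'$ will do, and $f'$ may differ from $f$ by an element $k\in A^*$ with $ak=\phi(a)k$ for all $a\in A$, i.e. $k(xa)=\phi(a)k(x)$. Then I would try to use $0$-cyclic amenability of $\ker\theta$ to build the needed $g$. Concretely, I would define a $0$-cyclic derivation on $\ker\theta$ from the scalar data. For example, take $E:\ker\theta\to(_0\ker\theta)^*$ built from $(f-c\phi)$ paired through a fixed $a_0$ with $\phi(a_0)=1$, or through an auxiliary functional on $B$. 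Its innerness should supply a functional whose extension to $B$ serves as $g$ on $\ker\theta$. Then $g$ is fixed on a complement by its value at a $b_0$ with $\theta(b_0)=1$.

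The step I expect to be the main obstacle is producing a genuinely nontrivial derivation on $\ker\theta$ out of $d$. Since $D$ has no mixed components, the $B$-side carries no information, and the condition $f(a)\theta=\phi(a)g$ forces $f$ to be a multiple of $\phi$ whenever $\phi\neq 0$. So I will first check whether the symmetric identity above already yields $f=c\phi$ modulo functionals $k$ as described. In that case the inner representation can be adjusted by replacing $f$ with $f-k$, and this needs no hypothesis on $\ker\theta$. If this adjustment fails, I would fall back to showing that the statement requires an additional hypothesis like (ii) of Theorem \ref{thm 4.11}, with $0$-cyclic amenability of $\ker\theta$ used only to treat the ideal $\{0\}\times\ker\theta$ in a Theorem \ref{theo 4.7}-type decomposition.
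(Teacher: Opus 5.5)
\textbf{Verdict: genuine gap.} Your handling of the characters $0$ and $(0,\psi)$ agrees with the paper. For $\Phi=(\phi,\theta)$, however, the proposal contains no proof: it stops at the obstacle you identify, and neither of your exits works.

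\textbf{Why your ``first check'' fails.} The identity $f(ab+ba)=\phi(a)f(b)+\phi(b)f(a)$ only says that $\operatorname{ad}_f$ is $\phi$-cyclic. On the other hand, $f=c\phi+k$ with $ak=\phi(a)k$ would force $\operatorname{ad}_f=0$. Nonzero $\phi$-cyclic inner derivations do exist, even on character cyclic amenable algebras. Take $A=\mathbb{C}u\oplus\mathbb{C}n$ with $u^2=u$, $un=n$, $nu=0$, $n^2=0$, and $\phi(u)=1$, $\phi(n)=0$. Let $f(u)=0$ and $f(n)=1$. Then $f$ satisfies the symmetric identity, but $\langle u,\operatorname{ad}_f(n)\rangle=f(un)=1\neq0$. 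This $A$ has the left identity $u$, so it is character cyclic amenable by Theorem \ref{the 2.3}. Your fallback, that the statement needs an extra hypothesis like (ii) of Theorem \ref{thm 4.11}, is also unfounded.

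\textbf{The idea you are missing (the paper's route).}
\begin{itemize}
\item The map $Q(a,b)=(a,\theta(b))$ is a surjective homomorphism $C\to A^\sharp$ with kernel $I=\{0\}\times\ker\theta\subseteq\ker\Phi$.
\item The character induced by $\Phi$ on $C/I\cong A^\sharp$ is $\widetilde{\phi}(a,\lambda)=\phi(a)+\lambda$.
\item Since $A^\sharp$ is unital, Theorem \ref{the 2.3} makes it $\widetilde{\phi}$-cyclic amenable, with no condition on $A$.
\item $I$ is $0$-cyclic amenable exactly by the hypothesis on $\ker\theta$.
\item Theorem \ref{theo 4.7} then gives $\Phi$-cyclic amenability of $C$.
\end{itemize}
You mention the ideal $\{0\}\times\ker\theta$, but without recognizing the quotient as a unitization you cannot conclude.

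\textbf{Your own route can also be closed, more cheaply.} The faulty step is the remark that ``the $B$-side carries no information.'' In $(_\Phi C)^*$ one has $(a,0)\cdot(f,g)=(af,f(a)\theta)$. Take the derivation identity $D\big((a,0)(a',0)\big)=(a,0)\cdot D(a',0)+\phi(a')D(a,0)$, with $D(x,0)=(d(x),0)$ as you obtained from Proposition \ref{prop 4.10} and $\delta=0$. Its second coordinate reads $0=\langle a,d(a')\rangle\,\theta$. Since $\theta\neq0$, this gives $d=0$, so $D=0$, which is trivially inner. Thus for $\Phi=(\phi,\theta)$ every $\Phi$-cyclic derivation on $A\times_\theta B$ vanishes. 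This case needs neither the hypothesis on $\ker\theta$ nor any search for $f=c\phi$ or for $g$.
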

     \begin{proof}
Put $C=A\times_\theta B$. By similar arguments to those used in the proof of Theorem~\ref{thm 4.11}, one can show that $C$ is $0$-cyclic amenable and $(0,\psi)$-cyclic amenable for every $\psi\in\Delta(B)$.
It remains to consider $\varphi\in\Delta(A)$ and put $\Phi=(\varphi,\theta).$ Define
$Q:C\longrightarrow A^\sharp$ by $ Q(a,b)=(a,\theta(b))~(a\in A, b\in B ).$
It is readily verified that $Q$ is a surjective continuous algebra homomorphism. Moreover, $\ker Q=0\times\ker\theta.$ Thus, $C/(0\times\ker\theta)\cong A^\sharp. $ Since $\Phi(0,b)=\theta(b)=0$ for every $b\in\ker\theta$, we have
$0\times\ker\theta\subseteq\ker\Phi.$ The character induced by $\Phi$ on the quotient
$C/(0\times\ker\theta)\cong A^\sharp$ is given by
$\widetilde{\varphi}(a,\lambda)=\varphi(a)+\lambda.$ Since $A^\sharp$ is unital, it has a bounded left approximate identity. Hence, by Theorem \ref{the 2.3}, $A^\sharp$ is character cyclic amenable and, in particular, it is $\widetilde{\varphi}$-cyclic amenable. On the other hand, $(0\times\ker\theta)$ is $0$-cyclic amenable by the assumption that $\ker\theta$ is $0$-cyclic amenable. Therefore, Theorem \ref{theo 4.7} yields that $C$ is $(\varphi,\theta)$-cyclic amenable.
\end{proof}

  
\begin{proposition}
Let $A$ and $B$ be Banach algebras and $\theta\in \Delta(B)$. If $A\times_{\theta} B$ is character cyclic amenable, then $B$ is character cyclic amenable.
\end{proposition}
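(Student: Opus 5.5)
The natural route is Proposition \ref{prop 2.2}(i), since $B$ is a retract of $A\times_\theta B$. Define
$$\Gamma_1:A\times_\theta B\longrightarrow B,\qquad \Gamma_1(a,b)=b,$$
and
$$\Gamma_2:B\longrightarrow A\times_\theta B,\qquad \Gamma_2(b)=(0,b).$$
Both maps are clearly bounded linear, with $\|\Gamma_1\|\le 1$ and $\|\Gamma_2\|=1$, and $\Gamma_1\circ\Gamma_2=I_B$. Once we know both are algebra homomorphisms, Proposition \ref{prop 2.2}(i) applied with the character cyclic amenable algebra $A\times_\theta B$ in place of $A$ gives the statement directly.

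The checks reduce to the multiplication formula
$$(a,b)(a',b')=(aa'+\theta(b')a+\theta(b)a',\,bb').$$
Its second coordinate is $bb'$, so $\Gamma_1\big((a,b)(a',b')\big)=bb'=\Gamma_1(a,b)\Gamma_1(a',b')$. For $\Gamma_2$, we have
$$(0,b)(0,b')=(0+\theta(b')\cdot 0+\theta(b)\cdot 0,\,bb')=(0,bb'),$$
so $\Gamma_2(bb')=\Gamma_2(b)\Gamma_2(b')$. Thus $\Gamma_2$ is the embedding of $B$ as the subalgebra $\{0\}\times B$.

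The only thing I would double-check is that Proposition \ref{prop 2.2} requires no compatibility of characters beyond what it already handles: it pulls back $\psi\in\Delta(B)\cup\{0\}$ to $\psi\circ\Gamma_1$. Here $\psi\circ\Gamma_1=(0,\psi)$, which lies in $\Delta(A\times_\theta B)$ by Proposition \ref{prop 3.1}, and it equals $0$ when $\psi=0$. So the pullback character is legitimate and no obstacle arises.

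As an alternative, one could note that $A\times\{0\}$ is a closed ideal with quotient isomorphic to $B$, and try a quotient-type argument. However, the quotient corollary above is stated only for characters $\phi\in\Delta(A)$, and it requires $I\subseteq\ker\phi$. The retraction argument avoids these restrictions, so I would use it.
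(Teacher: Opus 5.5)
Your proof is correct and follows the paper's own argument: the paper takes the canonical projection $\pi:A\times_\theta B\to B$ and the canonical injection $i:B\to A\times_\theta B$, notes $\pi\circ i=I_B$, and applies Proposition \ref{prop 2.2}(i). You additionally check that both maps are homomorphisms and that $\psi\circ\Gamma_1=(0,\psi)$ lies in $\Delta(A\times_\theta B)\cup\{0\}$; the paper leaves these points implicit.
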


\begin{proof}
Let $\pi:A\times_{\theta} B\longrightarrow B$ and $i:B\longrightarrow A\times_{\theta} B$ be the canonical projection and the canonical injection maps, respectively. Since $\pi\circ i=I_B$, it follows from Proposition \ref{prop 2.2} $(i)$  that $B$ is character cyclic amenable.
\end{proof}


The following example shows that character cyclic amenability of $A\times_{\theta} B$ does not imply character cyclic amenability of  $A$.

\begin{example}
Let $A$ be a Banach algebra that is not character cyclic amenable. Let $B=\Bbb C$, and $\theta$ be the identity map on $\Bbb C$. Then $A\times_{\theta} B=A\times_{\theta} \Bbb C=A^\sharp$ is character cyclic amenable by Theorem \ref{the 2.3}.
\end{example}  


\section{character cyclic amenability of module extension Banach algebras}

 Let $A$ be a Banach algebra, $\phi\in \Delta(A)\cup \{0\},$ and $X$ be a Banach right $A$-module. 
 Denote by $A\oplus {_{\phi}X}$ the module extension of $A$ corresponding to $_{\phi}X$;  that is, the $l^1$-direct sum of $A$ and $_{\phi}X$ as a Banach algebra endowed with the product 
 $$(a,x)(b,y)=(ab,\phi(a)y+x\cdot b)\hspace{1cm}(a,b\in A, x,y\in X).$$
Then the dual of  $A\oplus {_{\phi}X}$ can be identified with the $l^{\infty}$-direct sum $A^*\times {_{\phi}X}^*$ via the natural pairing: $$\left\langle (a,x), (f,g)\right\rangle =f(a)+g(x)$$ for all $(f,g)\in (A\oplus {_{\phi}X})^*$ and $(a,x)\in A\oplus {_{\phi}X}.$
 Since  the element $(0,x)$ is nilpotent in  $A\oplus {_{\phi}X}$ for all $x\in X$, it follows that $\Delta(A\oplus {_{\phi}X})=\{(\phi,0):\varphi\in \Delta(A)\}.$
  Furthermore, the $A\oplus {_{\phi}X}$-bimodule actions on $(_{(\phi,0)}(A\oplus {_{\phi}X})^*$  are given by:
$$(a,x)\cdot (f,g)=(a\cdot f+g(x)\phi, a\cdot g)$$
and 
$$ (f,g)\cdot (a,x)=(\phi,0)(a,x)(f,g)=(\phi(a)f,\phi(a)g) $$
where $(a,x)\in A\oplus {_{\phi}X}$ and $(f,g)\in A^*\times {_{\phi}X}^*$. For further details, see \cite{3}, \cite{131} and \cite{110}.
  
  Let $X$ be a Banach algebra. For each $x\in X$, we denote by $\hat{x}\in X^{**}$ the canonical image of $x$ in $X^{**}$, defined by $\hat{x}(f)=f(x),~ f\in X^*$.
   
  The following result characterises  cyclic derivations on $A\oplus {_{\phi}X}$.
  \begin{proposition}\label{prop 3.11}
  Let $A$ be a Banach algebra, $\phi\in \Delta(A)$, and let $X$ be a Banach right $A$-module. Then $D:A\oplus {_{\phi}X}\longrightarrow (_{(\phi,0)}(A\oplus {_{\phi}X}))^*$ is a $(\phi,0)$-cyclic derivation if and only if 
  $$ D(a,x)=(D_A(a)-D_X^*(\hat{x}), D_X(a)),$$
  for all $(a,x)\in A\oplus {_{\phi}X}$ such that 
  \begin{itemize}
  \item[(i)] $D_A:A\longrightarrow (_{\phi}A)^*$ is a $\phi$-cyclic derivation.
  \item[(ii)] $D_X:A\longrightarrow (_{\phi}X)^*$ is a derivation.
  \end{itemize}
  \begin{proof}
  By Theorem 3.1 of \cite{110},  $D:A\oplus {_{\phi}X}\longrightarrow (_{(\phi,0)}(A\oplus {_{\phi}X}))^*$ is a  derivation if and only if 
  $$ D(a,x)=(D_A(a)+T(x), D_X(a)),$$
  for all $(a,x)\in A\oplus {_{\phi}X}$ such that 
  \begin{itemize}
  \item[(i)] $D_A:A\longrightarrow (_{\phi}A)^*$ is a  derivation.
  \item[(ii)] $D_X:A\longrightarrow (_{\phi}X)^*$ is a derivation.
  \item[(iii)] $T:{_{\phi}X}\longrightarrow (_{\phi}A)^*$ is a bounded linear map with $T(x\cdot a)=D_X(a)(x)\phi+\phi(a)T(x),$  and $ \phi(a)T(x)=a\cdot T(x),$ for all $a\in A$ and $x\in X$. 
  \end{itemize}
  Hence, for every $(a,x), (b,y)\in A\oplus {_{\phi}X}$ we have:
  \begin{align*}
\left\langle (a,x),D(b,y)\right\rangle+\left\langle (b,y),D(a,x)\right\rangle &=\left\langle (a,x), (D_A(b)+T(y), D_X(b))\right\rangle \\
&+\left\langle (b,y), (D_A(a)+T(x), D_X(a))\right\rangle \\
&=\left\langle a, D_A(b)\right\rangle+\left\langle a, T(y)\right\rangle+ \left\langle x, D_X(b)\right\rangle\\
&+\left\langle b, D_A(a)\right\rangle+ \left\langle b, T(x)\right\rangle +\left\langle y, D_X(a)\right\rangle.
\end{align*}
  Thus, $D$ is a $(\phi,0)$-cyclic derivation if and only if $D_A$ is a $\phi$-cyclic derivation and $T(x)=-D_X^*(\hat{x})$.
  \end{proof}
  \end{proposition}
  
  By similar arguments to the proof of Proposition \ref{prop 3.11}, the following holds:
    \begin{proposition}\label{prop 3.12}
  Let $A$ be a Banach algebra and let $X$ be a Banach right $A$-module. Then $D:A\oplus {_{0}X}\longrightarrow (_{(0,0)}(A\oplus {_{0}X}))^*$ is a $0$-cyclic derivation if and only if 
  $$ D(a,x)=(D_A(a)-D_X^*(\hat{x}), D_X(a)),$$
  for all $(a,x)\in A\oplus {_{0}X}$ such that 
  \begin{itemize}
  \item[(i)] $D_A:A\longrightarrow (_{0}A)^*$ is a $0$-cyclic derivation.
  \item[(ii)] $D_X:A\longrightarrow (_{0}X)^*$ is a derivation.
  \end{itemize}
  \end{proposition}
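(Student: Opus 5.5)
We follow the proof of Proposition \ref{prop 3.11}, with $\phi=0$ throughout. First we describe all derivations. The product on $A\oplus{_0X}$ is $(a,x)(b,y)=(ab,\,x\cdot b)$. The dual actions on $(_{(0,0)}(A\oplus{_0X}))^*$ are $(a,x)\cdot(f,g)=(a\cdot f,\,a\cdot g)$, because the term $g(x)\phi$ disappears when $\phi=0$, and $(f,g)\cdot(a,x)=0$. Write
$$D(a,0)=(D_A(a),D_X(a)),\qquad D(0,x)=(T(x),S(x)).$$

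\textbf{Step 1: the $A$-part.} Expanding $D((a,0)(b,0))$ shows that $D_A\colon A\to(_0A)^*$ and $D_X\colon A\to(_0X)^*$ are derivations.

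\textbf{Step 2: the mixed products.} Since $(0,x)(0,y)=0$, we get $0=(0,x)\cdot D(0,y)$, and this holds trivially. The identity $(a,0)(0,x)=(0,0)$ gives $a\cdot D(0,x)=0$, that is, $a\cdot T(x)=0$ and $a\cdot S(x)=0$. This is the analogue of the condition $\phi(a)T(x)=a\cdot T(x)$ in Theorem 3.1 of \cite{110} with $\phi=0$. The identity $(0,x)(a,0)=(0,x\cdot a)$ gives $D(0,x\cdot a)=(0,x)\cdot D(a,0)=0$, so $T(x\cdot a)=0$ and $S(x\cdot a)=0$.

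\textbf{Step 3: cyclicity.} Expand
$$\langle (a,x),D(b,y)\rangle+\langle (b,y),D(a,x)\rangle$$
and sort the terms by type:
\begin{itemize}
\item the $(a,b)$-terms give exactly the $0$-cyclicity of $D_A$;
\item the $(a,y)$-terms give $\langle a,T(y)\rangle+\langle y,D_X(a)\rangle=0$, that is, $T(y)=-D_X^*(\hat y)$;
\item the $(x,y)$-terms give $\langle x,S(y)\rangle+\langle y,S(x)\rangle=0$.
\end{itemize}

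\textbf{Main obstacle: showing $S=0$.} This is the step where the argument differs from the $\phi\neq0$ case. There, the $X^*$-component of $D(0,x)$ vanishes because the derivation identity involves $\phi(a)$, and we choose $a$ with $\phi(a)\ne0$. That trick is unavailable here. Cyclicity alone only tells us that $S$ is skew, and the derivation identities above only say that $S$ kills $X\cdot A$ and that $S(x)$ is annihilated by the left action of $A$.

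The plan is to read the statement with the same convention as Theorem 3.1 of \cite{110}. There the derivation decomposition for module extensions has second coordinate $D_X(a)$ alone, which forces $S=0$. If an independent argument is required, we would try to show $S=0$ from $S(x\cdot a)=0$ together with the skew condition. Honestly, this seems to require $\overline{X\cdot A}=X$, or at least essentiality of the action. Without such a hypothesis, a nonzero skew form on $X/\overline{X\cdot A}$ appears to give a counterexample, analogous to Example \ref{exm 3.5}. So we would invoke the cited description of derivations, with $T$ and the $X^*$-component as in \cite{110} with $\phi=0$.

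\textbf{Step 4: the converse.} Assume $D_A$ is a $0$-cyclic derivation and $D_X$ is a derivation, and define $D(a,x)=(D_A(a)-D_X^*(\hat x),\,D_X(a))$. Put $T=-D_X^*\circ\hat{\ }$. We need $T(x\cdot a)=0$ and $a\cdot T(x)=0$; both follow from the derivation rule for $D_X$ with zero left action, namely $D_X(ab)=a\cdot D_X(b)$. Then $D$ is a derivation by Steps 1 and 2, and it is cyclic by the computation in Step 3 read backwards.
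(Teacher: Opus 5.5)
\textbf{There is a genuine gap: the statement is false as written, in both directions.} Your Steps 1--3 are correct, and your diagnosis in the ``main obstacle'' paragraph is right. That diagnosis shows the forward implication fails, so falling back on Theorem 3.1 of \cite{110} cannot rescue it. (That fallback is also all the paper's ``similar arguments to Proposition 3.11'' amount to.) For $\phi\neq0$, the $X^*$-component $S$ of $D(0,x)$ is killed by applying the derivation rule to $(0,x)(0,y)=0$: the term $g(x)\phi$ in $(0,x)\cdot(f,g)$ forces $\langle x,S(y)\rangle\phi=0$. For $\phi=0$ that term is absent. Concretely, take any $A$, let $X=\mathbb{C}^2$ with zero right action, and set $D(a,x)=(0,S(x))$ with $\langle y,S(x)\rangle=x_1y_2-x_2y_1$. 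Since $(b,y)(a,x)=(ba,0)$, we get $(a,x)\cdot D(a',x')=0=D\big((a,x)(a',x')\big)$, so $D$ is a derivation. It is $0$-cyclic by skewness. Its second coordinate depends on $x$, so it is not of the form $(D_A(a)-D_X^*(\hat{x}),D_X(a))$. No correct theorem can supply $S=0$ here.

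Step 4 contains a false claim. With $T(x)=-D_X^*(\hat{x})$ you get $\langle b,T(x\cdot a)\rangle=-\langle x,D_X(ab)\rangle$ and $\langle b,a\cdot T(x)\rangle=-\langle x\cdot b,D_X(a)\rangle=-\langle x,D_X(ba)\rangle$. So the two identities you need hold if and only if $D_X$ vanishes on $A^2$, and the rule $D_X(ab)=a\cdot D_X(b)$ does not give this. For a counterexample, take $A=X=\mathbb{C}$ with $x\cdot a=xa$, $D_A=0$, and $D_X(a)=ca$ with $c\neq0$. This $D_X$ is a derivation, because the right action on $(_0X)^*$ is zero. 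Then $\langle (b,y),D(a,x)\rangle=c(ay-bx)$ is $0$-cyclic. But $D\big((1,0)(0,1)\big)=D(0,0)=0$, whereas $\langle (b,y),(1,0)\cdot D(0,1)\rangle=\langle (b,y),D(0,1)\rangle=-cb$. So $D$ is not a derivation. The paper's proof has the same two omissions; already in Proposition 3.11, condition (iii) on $T$ is silently dropped.

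What your Steps 1--3 actually prove is the following corrected statement. $D$ is a $0$-cyclic derivation if and only if $D(a,x)=(D_A(a)-D_X^*(\hat{x}),\,D_X(a)+S(x))$, where:
\begin{itemize}
\item[(i)] $D_A$ is a $0$-cyclic derivation;
\item[(ii)] $D_X$ is a derivation with $D_X(A^2)=0$, equivalently $D_X(A)\subseteq (X\cdot A)^{\perp}$;
\item[(iii)] $S:X\to X^*$ is bounded, skew, and vanishes on $X\cdot A$.
\end{itemize}
For the converse, the identities of your Step 2 now genuinely hold. Skewness of $S$ gives $a\cdot S(x)=0$ from $S(X\cdot A)=0$.
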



\begin{theorem}
  Let $A$ be a Banach algebra, and let $X$ be a Banach right $A$-module. Then the following statements are valid: 
  \begin{itemize}
  \item[(i)] If $A\oplus {_{\phi}X}$ is character cyclic amenable, then  $A$ is character cyclic amenable.
  \item[(ii)] $A\oplus {_{\phi}X}$ is $(\phi,0)$-cyclic amenable if and only if 
  \begin{itemize}
  \item[1.] $A$ is $\phi$-cyclic amenable; and 
  \item[2.] For every derivation $D_X:A\longrightarrow (_{\phi}X)^*$,  there exists $g\in (_{\phi}X)^*$ such that $D_X(a)=a\cdot g-\phi(a)g~(a\in A)$ and $g(x)\phi=-D_X^*(\hat{x})$.
  \end{itemize}
  \item[(iii)] $A\oplus {_{0}X}$ is $(0,0)$-cyclic amenable if and only if  $A$ is $0$-cyclic amenable and for every derivation $D_X:A\longrightarrow (_{0}X)^*$,  there is $g\in (_{0}X)^*$ such that $D_X(a)=a\cdot g~(a\in A)$.
  \end{itemize}
  \end{theorem}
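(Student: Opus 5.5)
We will derive all three parts from the description of cyclic derivations in Proposition \ref{prop 3.11} (for $\phi\in\Delta(A)$) and Proposition \ref{prop 3.12} (for $\phi=0$), together with the retraction principle of Proposition \ref{prop 2.2}.

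For (i), take $\Gamma_1:A\oplus{_{\phi}X}\to A$, $(a,x)\mapsto a$, and $\Gamma_2:A\to A\oplus{_{\phi}X}$, $a\mapsto(a,0)$. Both are continuous homomorphisms, since $(a,x)(b,y)$ has first coordinate $ab$ and $(a,0)(b,0)=(ab,0)$. Moreover $\Gamma_1\circ\Gamma_2=I_A$, so Proposition \ref{prop 2.2}(i) (with the roles of $A$ and $B$ there played by $A\oplus{_{\phi}X}$ and $A$) yields that $A$ is character cyclic amenable. This is the same argument as Corollary \ref{lemma 2.3}.

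For (ii), write an arbitrary $(\phi,0)$-cyclic derivation as $D(a,x)=(D_A(a)-D_X^*(\hat x),D_X(a))$, as in Proposition \ref{prop 3.11}. Then compute a general inner derivation using the module actions recorded before that proposition. For $(f,g)\in A^*\times({_{\phi}X})^*$,
$$\operatorname{ad}_{(f,g)}(a,x)=(a\cdot f+g(x)\phi-\phi(a)f,\ a\cdot g-\phi(a)g).$$
Hence $D$ is inner if and only if there is such an $(f,g)$ with three properties: $D_A(a)=a\cdot f-\phi(a)f$, $D_X(a)=a\cdot g-\phi(a)g$, and $-D_X^*(\hat x)=g(x)\phi$ for all $x$.

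For the forward direction, first take any $\phi$-cyclic $D_A$ and put $D(a,x)=(D_A(a),0)$, which is admissible by Proposition \ref{prop 3.11} with $D_X=0$. Innerness of $D$ gives $f$ with $D_A=\operatorname{ad}_f$, which proves condition 1. Next take any derivation $D_X$ and set $D(a,x)=(-D_X^*(\hat x),D_X(a))$, using $D_A=0$; this is cyclic by Proposition \ref{prop 3.11}. Innerness of $D$ then produces the $g$ required in condition 2.

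Conversely, suppose conditions 1 and 2 hold and $D$ is given. Condition 1 provides $f$ with $D_A=\operatorname{ad}_f$, and condition 2 provides $g$ for $D_X$ with $g(x)\phi=-D_X^*(\hat x)$. Then $D=\operatorname{ad}_{(f,g)}$ by the displayed formula.

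Part (iii) is the same argument with $\phi=0$, using Proposition \ref{prop 3.12}. Here the actions degenerate: the left action contributes $g(x)\cdot 0=0$ and the right action is zero. Thus $\operatorname{ad}_{(f,g)}(a,x)=(a\cdot f,a\cdot g)$, and the condition involving $D_X^*$ turns into the requirement $D_X^*(\hat x)=0$ for all $x$. We must check that this requirement follows from the hypothesis $D_X(a)=a\cdot g$.

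That check is the main obstacle. The quantity $D_X^*(\hat x)$ is the functional $a\mapsto\langle x,D_X(a)\rangle$. To show it vanishes we have to exploit the module action $\langle x,a\cdot g\rangle=g(x\cdot a)$ together with the derivation identity. If this fails in general, the statement of (iii) should be read with the compatibility built into the form of $D$, as Proposition \ref{prop 3.12} does. In that case the equivalence follows exactly as in (ii), provided we verify carefully that the $T$-component of an inner derivation matches $-D_X^*(\hat x)$.
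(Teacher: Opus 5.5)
Part (i), the necessity of condition 1, and the sufficiency half of (ii) follow the paper's argument and are fine. The obstacle you flag in (iii), however, is genuine and cannot be overcome, so (iii) is left unproved. In fact its ``if'' half is false, and the paper's ``analogous'' proof hides the same problem.

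For $\phi=0$, the first component $a\cdot f$ of $\operatorname{ad}_{(f,g)}(a,x)$ does not depend on $x$. A $0$-cyclic derivation, by contrast, has first component $D_A(a)-D_X^*(\hat x)$, and $D_X(a)=a\cdot g$ does not force $D_X^*(\hat x)=0$. Here is a concrete counterexample. Let $A=\mathbb{C}$ with zero product, let $X=\mathbb{C}^2$ with $x\cdot a=(0,ax_1)$, and let $p_1,p_2$ be the coordinate functionals on $X$. Then $A$ is $0$-cyclic amenable. The derivations $A\to({_0X})^*$ are exactly $D_X(a)=\lambda a\,p_1=a\cdot(\lambda p_2)$, so both conditions of (iii) hold. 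Now define $D$ by $\langle (b,y),D(a,x)\rangle=ay_1-bx_1$; this is your form with $D_A=0$ and $D_X(a)=ap_1$. It is a $0$-cyclic derivation of $A\oplus{_0X}$:
\begin{itemize}
\item it is skew;
\item both sides of the derivation identity vanish, because all products $(u,v)(a,x)=(0,(0,av_1))$ lie in $\{0\}\times(\{0\}\times\mathbb{C})$, which is annihilated by every $D(c)$ and sent to $0$ by $D$.
\end{itemize}
Since $A^2=0$, every inner derivation has first component $a\cdot f=0$. But $D(0,(1,0))$ has first component $b\mapsto -b$, so $D$ is not inner. Your fallback of reading the form of $D$ into the statement therefore rescues nothing.

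Your necessity proof of condition 2 in (ii) also has a step that fails, again shared with the paper. It uses the converse half of Proposition \ref{prop 3.11}: that $D(a,x)=(-D_X^*(\hat x),D_X(a))$ is a derivation for \emph{every} derivation $D_X$. Testing the derivation identity on $(a,0)(0,y)=(0,\phi(a)y)$ gives $a\cdot D_X^*(\hat y)=\phi(a)D_X^*(\hat y)$. Since $D_X$ is a derivation, this amounts to $D_X(a)(y\cdot b)=0$ for all $a,b\in A$ and $y\in X$, which is false in general.

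Condition 2 is genuinely not necessary. Take $A=X=\mathbb{C}^2$ with coordinatewise product and $\phi(a)=a_1$. The algebra $A\oplus{_\phi X}$ has identity $((1,1),0)$, so it is $(\phi,0)$-cyclic amenable by Theorem \ref{the 2.3}. Yet the inner derivation $D_X=\operatorname{ad}_{p_2}$, $D_X(a)=(a_2-a_1)p_2$, violates condition 2: evaluating $g(x)\phi=-D_X^*(\hat x)$ at $x=(0,1)$ and at $(0,1)\in A$ gives $0=-1$.

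Carrying the mixed-product constraints through shows what is actually true. A $(\phi,0)$-cyclic derivation has $D_X(a)=\phi(a)g_0$ with $g_0\in(X\cdot A)^\perp$, and for such $D_X$ condition 2 holds automatically with $g=-g_0$. So the correct form of (ii) is simply: ``$A\oplus{_\phi X}$ is $(\phi,0)$-cyclic amenable iff $A$ is $\phi$-cyclic amenable.''
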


\begin{proof}
(i) Suppose that  $A\oplus {_{\phi}X}$ is character cyclic amenable. Let $h_1:A\oplus {_{\phi}X}\longrightarrow A$, be defined by $h_1(a,x)=a~(a\in A, x\in X)$ and $h_2:A\longrightarrow A\oplus {_{\phi}X}$, be defined by $h_2(a)=(a,0)~(a\in A)$. Since $h_1$ and $h_2$ are continuous homomorphisms such that $h_1\circ h_2=I_A$, Proposition 3.3  implies that  $A$ is character cyclic amenable.

(ii) Suppose that conditions $ 1$ and $2$ hold. Let  $D:A\oplus {_{\phi}X}\longrightarrow (_{(\phi,0)}(A\oplus {_{\phi}X}))^*$ be a $(\phi,0)$-cyclic derivation. By Proposition \ref{prop 3.11}, $D$ is of the form
$$D(a,x)=(D_A(a)-D_X^*(\hat{x}), D_X(a)),\qquad (a,x)\in A\oplus {_{\phi}X}.$$
where  $D_A$ is a $\phi$-cyclic derivation and  $D_X$ is  a derivation. By the assumptions, there exist $f\in (_{\phi}A)^*$ and $g\in (_{\phi}X)^*$ such that $D_A(a)=a\cdot f-\phi(a)f~(a\in A)$, $D_X(a)=a\cdot g-\phi(a)g~(a\in A)$ and $g(x)\phi=-D_X^*(\hat{x})~(x\in X)$.  It follows that,
  \begin{align*}
 (a,x)\cdot (f,g)-(f,g)\cdot (a,x)&=(a\cdot f+g(x)\phi, a\cdot g)-(\phi(a)f,\phi(a)g)  \\
&=(a\cdot f-\phi(a)f+g(x)\phi, a\cdot g-\phi(a)g)\\
&=(D_A(a)-D_X^*(\hat{x}),D_X(a))=D(a,x),
\end{align*}
for all $(a,x)\in A\oplus {_{\phi}X}$. Thus, $D$ is an inner derivation, and so $A\oplus{_{\phi}X}$ is $(\phi,0)$-cyclic amenable. 

Conversely, suppose that $A\oplus {_{\phi}}X$ is $(\phi,0)$-cyclic amenable. From (i),  $A$ is $\phi$-cyclic amenable, satisfying condition 1. Let $D_X:A\longrightarrow (_{\phi}X)^*$ be a derivation. Define $D:A\oplus {_{\phi}X}\longrightarrow (_{(\phi,0)}(A\oplus {_{\phi}X}))^*$, by 
$$D(a,x)=(-D_X^*(\hat{x}),D_X(a))~((a,x)\in A\oplus {_{\phi}X}).$$
By Proposition \ref{prop 3.11}, $D$ is a $(\phi,0)$-cyclic derivation. Since  $A\oplus {_{\phi}X}$ is $(\phi,0)$-cyclic amenable, there exist  $f\in (_{\phi}A)^*$ and $g\in (_{\phi}X)^*$ such that 
\begin{align*}
 (-D_X^*(\hat{x}),D_X(a))&=D(a,x)=(a,x)\cdot (f,g)-(f,g)\cdot (a,x)\\
&=(a\cdot f+g(x)\phi, a\cdot g)-(\phi(a)f,\phi(a)g)  \\
&=(a\cdot f-\phi(a)f+g(x)\phi, a\cdot g-\phi(a)g),\\
\end{align*}
for every $(a,x)\in A\oplus {_{\phi}X}$. It follows that $D_X(a)=a\cdot g-\phi(a)g~(a\in A)$.  Furthermore, by setting $a=0$, we conclude that  $g(x)\phi=-D_X^*(\hat{x})$, which confirms that  condition 2 holds. 

(iii) The proof is analogous to the above.
\end{proof}

\end{document}